\definecolor{r}{rgb}{.9,0.1,.3}
\definecolor{verdeosc}{rgb}{0,0.6,0}
\newcommand{\Rr}{\mathcal R}
 \newcommand{\RR}{\mathbf{R}}  
  \newcommand{\Div}{\operatorname{Div}}
 \newcommand{\area}{\operatorname{area}}
 \newcommand{\eps}{\epsilon}
\newcommand{\ee}{\mathbf e}
\newcommand{\on}{\operatorname}
\newcommand{\mres}{\mathbin{\vrule height 1.6ex depth 0pt width
0.13ex\vrule height 0.13ex depth 0pt width 1.3ex}}
\newcommand{\pdf}[2]{\frac{\partial #1}{\partial #2}}
\newtheorem*{theorem*}{Theorem}
\newtheorem{theorem}{Theorem}
\newtheorem{lemma}[theorem]{Lemma}
\newtheorem{proposition}[theorem]{Proposition}
\newtheorem{claim}{Claim}
\newtheorem*{claim*}{Claim}
\theoremstyle{definition}
\newtheorem{remark}[theorem]{Remark}
\def\pproof#1{\@ifnextchar[\opargproof
{\opargproof[\it Proof of #1.]}}
\def\opargproof[#1]{\par\noindent {\bf #1 }}
\newcommand{\cP}{\mathcal{P}}
\newcommand{\Infty}{\widehat{\infty}}
\newcommand{\graph}{\operatorname{graph}}
\definecolor{ForestGreen}{RGB}{34,139,34}
\title[Classification of Semigraphical Translators]{Classification of Semigraphical Translators}
\thanks{Some of this work was carried out while the authors were visitors at the Simons Laufer Mathematical Sciences Institute (SLMath, formerly MSRI) in Berkeley, CA during the Fall 2024 semester, in a program supported by National Science Foundation Grant No. DMS-1928930.}
\author[F. Mart\'in]{Francisco Mart\'{\i}n}
\address{Francisco Mart\'in\newline
Departamento de Geometr\'ia y Topolog\'ia  \newline
Instituto de Matem\'aticas  de Granada (IMAG) \newline
Universidad de Granada\newline
18071 Granada, Spain\newline
{\sl E-mail address:} {\bf fmartin@ugr.es}
}%
\thanks{F. Mart\'in was partially supported by the grants PID2020-116126-I00 and PID2024-156031NB-I00 funded by MICIU/AEI/
10.13039/501100011033 and by the IMAG-Maria de Maeztu grant CEX2020-001105-M funded by MICIU/AEI/10.13039/501100011033.}
\author[M. S\'aez]{Mariel S\'aez}
\address{Mariel S\'aez\newline
P. Universidad Cat\'olica de Chile \newline
Facultad de Matem\'aticas\newline
Avda. Vicu\~na Mackenna 4860 \newline
Macul, Santiago, 6904441, Chile \newline
{\sl E-mail address:} {\bf mariel@uc.cl}
}
\thanks{ M. S\'aez was partially supported by the grant Fondecyt Regular 1190388. }
\author[R. Tsiamis]{Raphael Tsiamis}
\address{Raphael Tsiamis\newline
Department of Mathematics \newline
Columbia University \newline 
2990 Broadway, New York NY 10027, USA \newline
{\sl E-mail address:} {\bf r.tsiamis@columbia.edu}
}
\thanks{ R. Tsiamis was partially supported by the A.G. Leventis Foundation Scholarship and the Onassis Foundation Scholarship. 
He acknowledges the hospitality of the University of Granada, where part of this work was carried out.}
\author[B. White]{Brian White}
\address{Brian White\newline
Department of Mathematics \newline
 Stanford University \newline 
  Stanford, CA 94305, USA\newline
{\sl E-mail address:} {\bf bcwhite@stanford.edu}
}
\date{November 22, 2024. Revised February 16,  2026}
\subjclass[2010]{Primary 53E10, 53C21, 53C42}
\keywords{Mean curvature flow, singularities, translators.}
\begin{document}

\begin{abstract}
We complete the classification of semigraphical translators for mean curvature flow in $\mathbf{R}^3$ that was initiated
by Hoffman-Mart\'in-White. 
Specifically, we show that there is no solution to the translator equation on the upper half-plane with alternating positive and negative infinite boundary values,
and we prove the uniqueness of pitchfork and helicoid translators.
The proofs use Morse-Rad\'o theory for translators and an angular maximum principle.
\end{abstract}

\maketitle
\tableofcontents
\vspace*{0.15in}


\section{Introduction}

A surface $M$ in $\RR^3$ is called a 
{\bf translator} (for mean curvature flow) with velocity 
  $\mathbf{v}$ if
  \begin{equation*}\label{eqn:translator-MCF}
M \mapsto M_t := M + t \mathbf{v}
\end{equation*} 
is a mean curvature flow, i.e., if the normal velocity at each point is
equal to the mean curvature vector: 
$\vec{H}=\mathbf{v}^\perp$. By rotating and scaling,
it suffices (for nonzero velocities)
to consider the case $\mathbf{v}= -\ee_3$.
In that case, we refer to $M$ simply as a {\bf translator}.  
Ilmanen~\cite{ilmanen-1994} observed that $M$ is a translator
if and only if it is minimal with respect to the translator metric $g_{ij}(x,y,z)=e^{-z}\delta_{ij}$.

If the surface $M$ is the graph of a function $u$ over a region of the $(x,y)$-plane, then
the translator equation $\vec{H}=(-\ee_3)^\perp$ is equivalent to
\begin{equation} \label{eqn:translatorequationR3}
    \Div \left( \frac{D u}{\sqrt{1+|D u|^2}} \right)= - \frac{1}{\sqrt{1+|D u|^2}},
\end{equation}
which is a quasilinear elliptic PDE.

A smooth, connected, complete properly embedded translator $M$ is called {\bf semigraphical} if it contains a non-empty, discrete collection $\{ L_n \}_n$ of vertical lines such that $M \setminus \bigcup_n L_n$ is a graph~$z=z(x,y)$.
This notion  was introduced 
 by Hoffman-Mart\'in-White
 in \cite{tridents}, which included 
  a partial classification. 
Building on their work, we complete the classification of semigraphical translators and prove 
\begin{theorem}\label{main-theorem}
A semigraphical translator $M$ in $\RR^3$ is one of the following:
\begin{enumerate}[(1)]
\item a (doubly periodic) Scherk translator;
\item a (singly periodic) Scherkenoid;
\item a (singly periodic) 
  translating helicoid;
\item a pitchfork; 
\item a (singly periodic) trident.
\end{enumerate}
Up to isometries of $\RR^3$, 
\begin{enumerate}[\upshape (i)]
\item 
translating helicoids and 
 pitchforks 
are uniquely determined by 
a single parameter, the width of a
fundamental region,
\item
tridents are uniquely determined by 
a single parameter, the period, and
\item
Scherk translators and Scherkenoids are uniquely determined by two parameters, the width and an angle.
\end{enumerate}
\end{theorem}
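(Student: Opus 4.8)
The plan is to treat Theorem~\ref{main-theorem} as the synthesis of three logically separate pieces, only the first two of which require new work, the third being the existing classification of Hoffman--Mart\'in--White \cite{tridents}. Concretely: (a) from \cite{tridents} we already know that every semigraphical translator $\Sigma$ must be one of the five listed surfaces \emph{or} a hypothetical sixth type --- a translator containing the $z$-axis and graphical over the slit plane $\{(x,y):y\neq 0\}$, whose two graphical halves would have alternating infinite boundary values $+\infty$ and $-\infty$ along the two sides of the slit. Ruling out this last possibility is exactly the assertion that the translator equation \eqref{eqn:translatorequationR3} on the upper half-plane with boundary data alternating between $+\infty$ and $-\infty$ on successive boundary rays has no solution. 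So the first step I would carry out is to set up this Jenkins--Serrin-type boundary value problem carefully and then derive a contradiction from the existence of a solution, using the Morse--Rad\'o theory for translators and the angular maximum principle advertised in the abstract: the idea is that the level sets (or the nodal set of the difference of $\Sigma$ with a suitable comparison translator, e.g.\ a tilted grim reaper or a Scherk piece) would have to exhibit a branching/index behavior near the boundary that Morse--Rad\'o forbids, or alternatively that the ``angle function'' (the argument of the horizontal projection of the downward-pointing normal, or the angle coordinate along the flow) would violate a maximum principle when forced to wind by $2\pi$ around the axis while remaining harmonic-like.

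The second step is the uniqueness assertions in items (i) and (ii): that translating helicoids and pitchforks are each determined up to ambient isometry by the width of a fundamental domain, and that tridents are determined by the period. For each of these families I would argue by a continuity/rigidity scheme. Given two semigraphical translators $\Sigma_1,\Sigma_2$ of the same type with the same width (resp.\ period), I would normalize them to share the same singular vertical lines and the same asymptotic geometry, then compare them by sliding one vertically: consider $\Sigma_1$ and $\Sigma_2 + c\,\ee_3$ and look at the nodal set of the difference of the two graph functions on the common domain. Morse--Rad\'o theory controls the number and structure of the nodal components in terms of the boundary behavior, and the maximum principle at the shared vertical lines (where both surfaces are asymptotic to the same vertical plane multiplicity-one sheets) together with the asymptotics at infinity should force the nodal set to be empty for generic $c$, hence $\Sigma_1 = \Sigma_2 + c\,\ee_3$; a further normalization of the height (using, say, that both contain a prescribed line, or a symmetry) gives $c=0$. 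The helicoid case carries the extra feature of the screw-motion symmetry, which must be shown to be forced rather than assumed; here the angular maximum principle does the work, showing the angle function is monotone and hence that the surface is invariant under the corresponding one-parameter group.

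The third step is purely bookkeeping: assemble the enumeration of \cite{tridents}, the non-existence from Step~1, and the uniqueness statements from Step~2 into the final list, and record the parameter counts --- two parameters (width and angle) for the Scherk translators and Scherkenoids, one parameter for helicoids, pitchforks, and tridents. I would also note in passing which of these were already known (Scherk translators and Scherkenoids, whose two-parameter families are classical) so that the theorem reads as a clean closure of the classification rather than a reproof of everything.

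The main obstacle I expect is Step~1, the non-existence of the alternating $\pm\infty$ solution on the half-plane. The difficulty is that Jenkins--Serrin boundary value problems of this flavor are often \emph{solvable} for the minimal surface equation (Scherk-type solutions exist precisely because the alternating data balances), so the non-existence here must come from a genuinely translator-specific effect --- the asymmetry introduced by the $-1/\sqrt{1+|Du|^2}$ right-hand side of \eqref{eqn:translatorequationR3}, which breaks the $z \mapsto -z$ symmetry that a minimal Scherk surface enjoys. Capturing that asymmetry quantitatively is the crux: I anticipate needing a sharp flux/monotonicity computation along the boundary rays, or a delicate application of the angular maximum principle comparing the putative solution against an explicit translator (a tilted grim reaper surface) that already occupies the relevant asymptotic chamber, to show that the required infinite boundary values on \emph{both} signs cannot be simultaneously attained. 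Getting the comparison surface and the barrier argument exactly right, so that the contradiction is airtight near the corner where $+\infty$ data meets $-\infty$ data, is where the real effort will lie.
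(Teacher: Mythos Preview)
Your three-step decomposition matches the paper's, and you correctly note that trident uniqueness and the two-parameter Scherk/Scherkenoid classification are already in \cite{tridents} and \cite{scherk-like}; but the concrete mechanisms you propose for Steps~1 and~2 miss the actual arguments. For the yeti (Theorem~\ref{yeti-theorem}), the angular maximum principle is not used at all, and the contradiction is not obtained near the corner where $+\infty$ meets $-\infty$; it is obtained at $x\to +\infty$. A rotating grim-reaper barrier (Chini's technique, Claim~\ref{reaper-claim}) together with a Morse--Rad\'o count of the ends of the zero set of $u-g$, for a fixed tilted reaper $g$, show that for every large $x$ there is some $y\in(0,c)$ with $u_y(x,y)<0$. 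From this one extracts a smooth curve $x\mapsto(x,Y(x))$ on which $u_y=0$ and $|Du|$ is bounded; a gradient-propagation lemma from \cite{graphs-himw} then yields uniform bounds on $[x_1,\infty)\times K$ for every compact $K\subset(0,\infty)$, so the horizontal translates $u(x+s,y)-u(s,1)$ subconverge as $s\to\infty$ to a complete translating graph over the full half-plane $\RR\times(0,\infty)$. That is impossible by the classification of complete graphical translators in \cite{graphs-himw}, and this is where the translator-specific asymmetry you correctly identify is actually exploited---not via a local flux or angle computation at the axis.

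For the uniqueness of pitchforks and translating helicoids, vertical sliding $\Sigma_2+c\,\ee_3$ will not by itself manufacture a critical point of the difference, and the paper proceeds differently. The crucial input is that the Gauss map image of $\graph(f)$ depends only on the width $w$ (Fact~(F\ref{pitchfork-fact-1})); hence given two solutions $f_1,f_2$ one can choose a \emph{horizontal} translate $v$, perturbed so that $y(v)\ne 0$, for which $g(p)=f_1(p)-f_2(p+v)$ has a critical point. This places $g$ on a strictly narrower strip with explicit $\pm\infty$ boundary values; Morse--Rad\'o makes $g^{-1}(0)$ a tree with at least four ends, and the angular maximum principle (Theorem~\ref{angle-theorem}) is then applied---not to an angle function on the surface, but to the pair $(f_1,\,f_2(\cdot+v))$ on a region trapped between two ends running to $+\widehat\infty$---forcing $g\equiv 0$ there, the desired contradiction. (In the pitchfork case a grim-reaper asymptotic, Fact~(F\ref{pitchfork-fact-3}), separately limits the ends at $-\widehat\infty$.) Finally, the ``translating helicoid'' here is semigraphical with fundamental piece a graph over a finite strip, extended by Schwarz reflection about vertical lines; there is no screw-motion symmetry to establish, so that portion of your plan targets the wrong object.
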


See~\cite{tridents} for pictures of the surfaces in Theorem~\ref{main-theorem}.

Theorem~\ref{main-theorem}, minus Assertions (i), (ii), and (iii), is the same as Theorem 34 of~\cite{tridents}, except that the list in Theorem 34 included a sixth item, sometimes referred to as a ``yeti" (or ``abominable flowman").
In~\cite{tridents}, yetis were conjectured not to exist.
Assertions (i), (ii), and (iii) assert that, for the allowed values of the parameters, the indicated surfaces exist and are unique.
Existence and uniqueness for the tridents was proved in~\cite{tridents}, and existence  and uniqueness for the other surfaces was proved in~\cite{scherk-like}, except that uniqueness of pitchforks and uniqueness of translating helicoids were left as conjectures.

In this paper, we complete the proof of Theorem~\ref{main-theorem} by proving those three conjectures: uniqueness for pitchforks (\S\ref{pitchfork-section}) and translating helicoids (\S\ref{helicoid-section}), and non-existence of the yeti (\S\ref{yeti-section}).

It is natural to study and classify semigraphical translators following the classification of graphical solitons in $\RR^3$, due to the work of Wang, Spruck-Xiao, and Hoffman-Ilmanen-Mart\'in-White \cites{ancient-wang, spruck-xiao, graphs-himw}.
Another starting point for the study of semigraphical translators is the construction of translating analogs to Scherk's doubly periodic minimal surfaces.
These were obtained in \cite{scherk-like} along with more examples, some resembling well-known minimal surfaces and some without such analogs.
For example, the pitchfork~(4) contains a single vertical line of symmetry (see Figure~\ref{fig:pitchfork}).
The uniqueness of pitchforks can therefore be viewed as the translator analog of the characterization of the half Enneper's surface as the unique properly embedded non-flat stable minimal surface with straight line boundary and quadratic area growth, cf. \cites{perez}.

The doubly periodic (``Scherk'') and singly periodic (``Scherkenoid'') translating surfaces (1) and (2) of Theorem \ref{main-theorem} are constructed in \cite{scherk-like} via repeated Schwarz reflections around vertical axes of the graph of a function solving the translator
 equation~\eqref{eqn:translatorequationR3} on:
\begin{enumerate}[(i)]
\item for Scherk translators, a parallelogram $\cP(\alpha, w,L)$ of base angle $\alpha \in (0,\pi)$, width $w$, and unique length $L(\alpha,w)$, with boundary values $\pm \infty$ on the pairs of opposite sides; 
\item for Scherkenoids, the half-strip of boundary data that results from extending the horizontal sides of $\cP(\alpha,w,L)$ above into rays in the $(+1,0)$-direction.
\end{enumerate}
Using a sequential compactness theorem \cite{scherk-like}*{Theorem 10.1}, one can extract subsequential limits from sequences of these Scherk or Scherkenoid surfaces as the base angle $\alpha$ tends to $\pi$. 
The limit surface is called a {\bf translating helicoid} if $w<\pi$ and a {\bf pitchfork} if $w \ge \pi$.
The uniqueness results in the current paper show that the limit is unique and thus does not depend on the choice of subsequence.

Pitchforks are simply connected and have entropy $3$.
They might therefore arise as blowups of mean curvature flows of surfaces that are initially closed and smoothly immersed.
(By recent work~\cite{bamler-kleiner} of Bamler and Kleiner, they cannot arise as blowups if the initial surface is embedded.)
All semigraphical translators other than pitchforks and tridents have cubic area growth, and therefore infinite entropy, hence they cannot arise as such blowups.
Tridents have finite entropy but infinite genus, and thus they cannot arise as blowups at the first singular time.

In \cite{gama}, pitchforks are characterized as the only complete embedded simply connected translating solitons that are contained in a slab, have entropy $3$, and contain a vertical line. 

See Figure \ref{fig:pitchfork} and Figure \ref{fig:helicoid} for example illustrations of these translators and their fundamental pieces.
\begin{figure}[ht]
{\includegraphics[width=.45\textwidth]{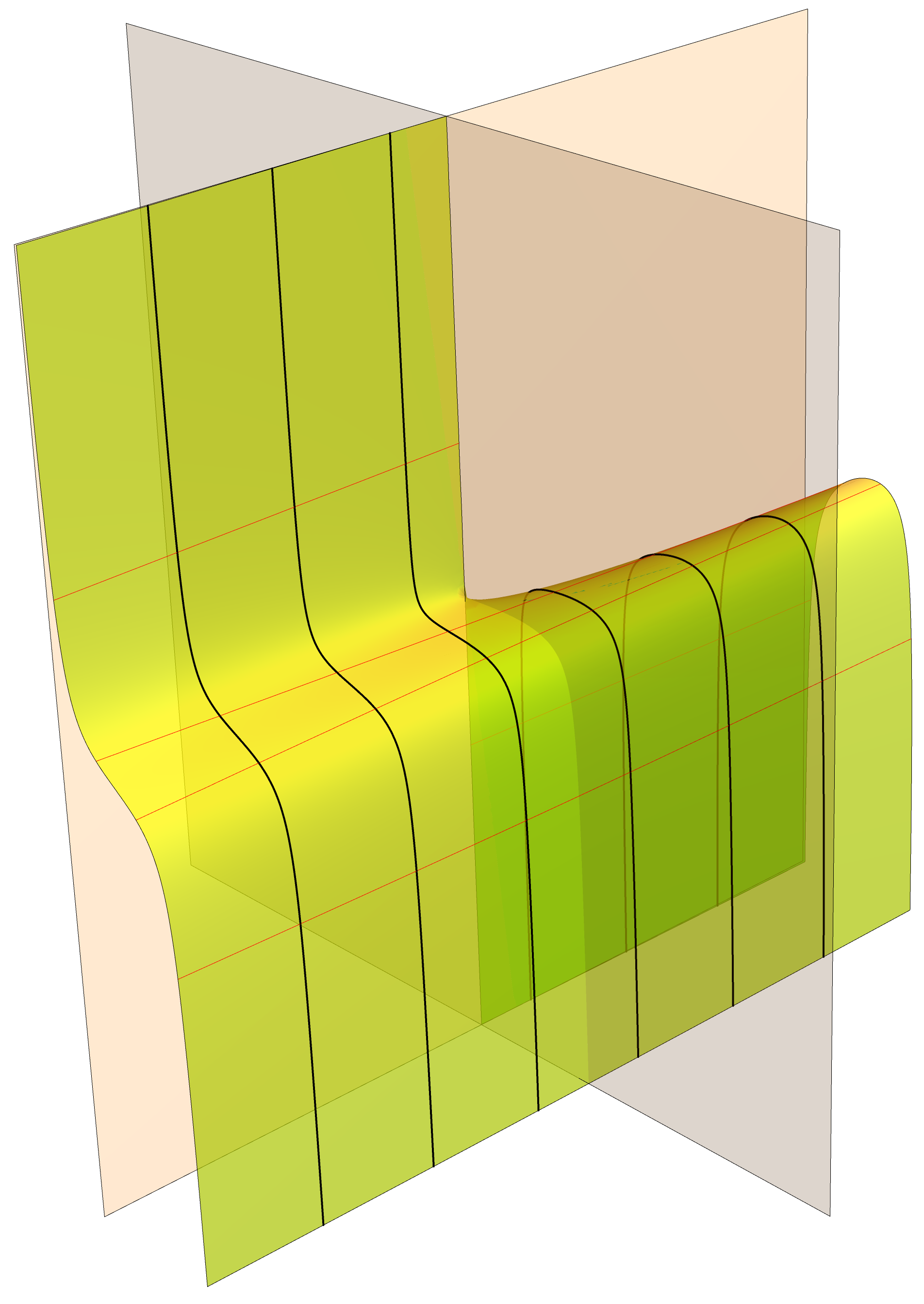}}
{\includegraphics[width=.45\textwidth]{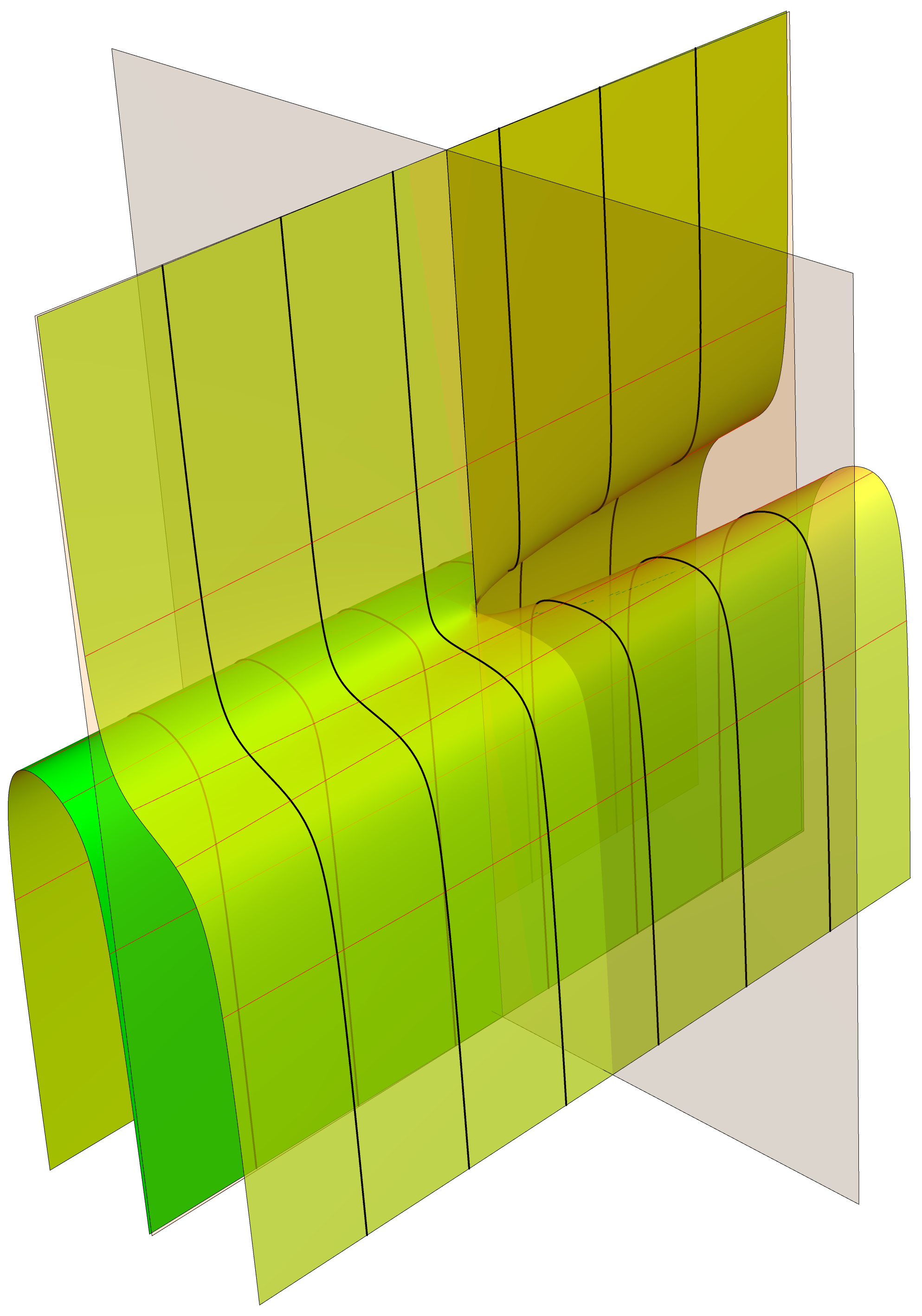}}
\caption{\small \sffamily Left: A fundamental piece of the pitchfork of width $\pi$.
Right: The whole surface, obtained from the fundamental piece by a $180^{\circ}$ rotation around the $z$-axis.} 
\label{fig:pitchfork}
\end{figure}
\begin{figure}[ht]
{\includegraphics[width=.4\textwidth]{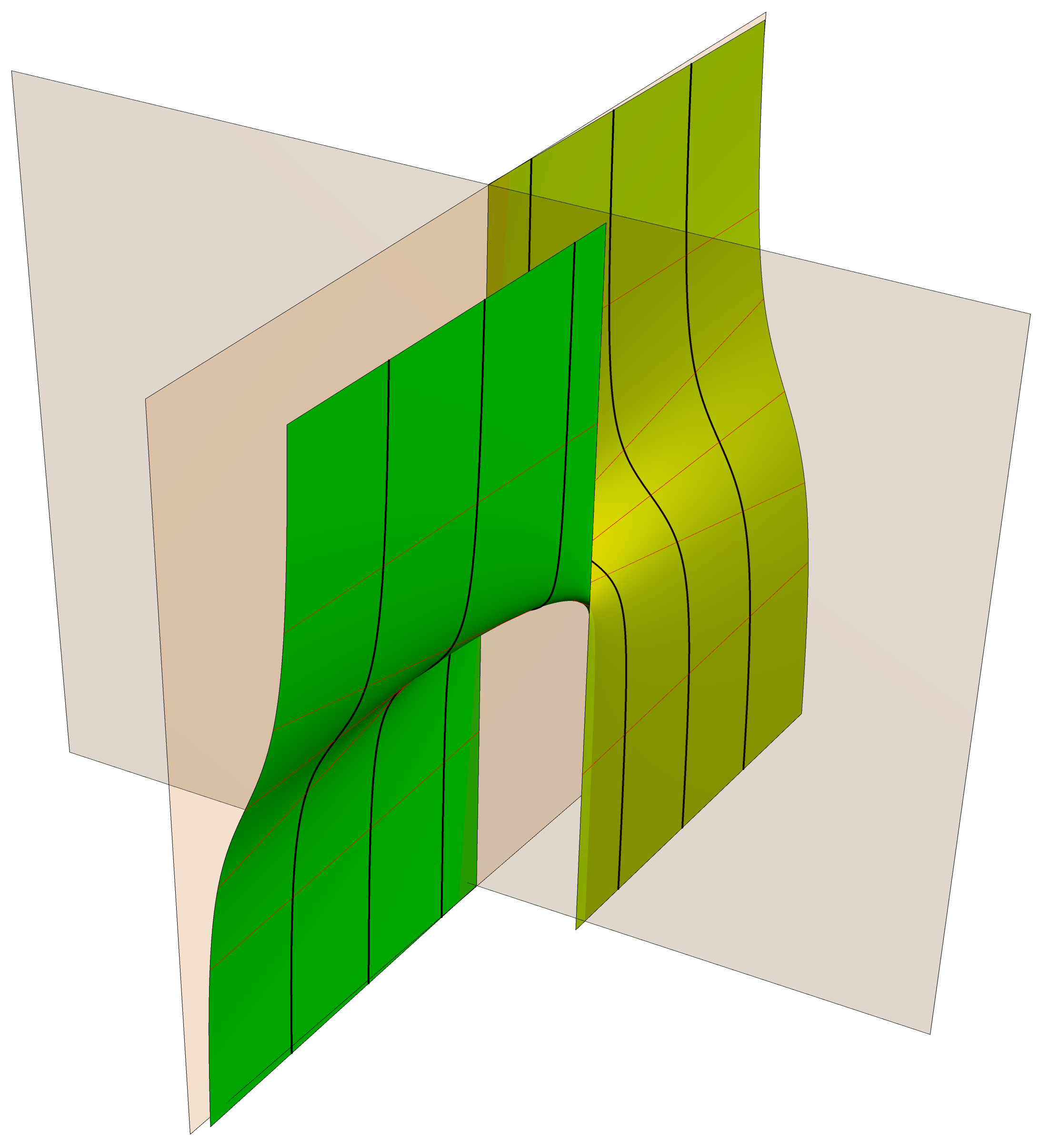}}
{\includegraphics[width=.45\textwidth]{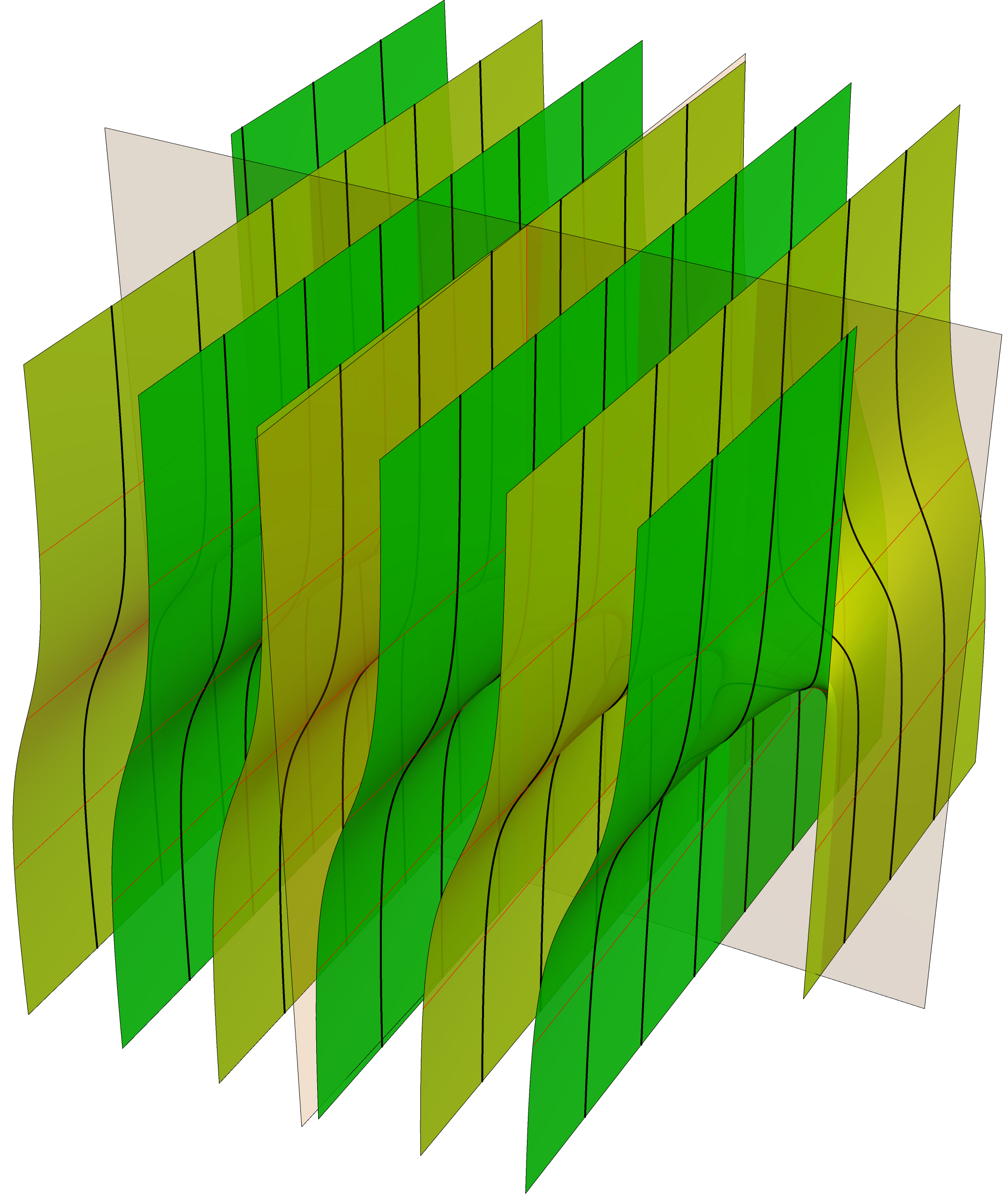}}
\caption{\small \sffamily Left: A fundamental piece of the helicoid of width $\pi/2$. Right: Part of the surface, obtained by successive reflections along the vertical boundary lines.}\label{fig:helicoid}
\end{figure}
\newline

{\bf Tridents}, the last item~(5)
in the list in Theorem~\ref{main-theorem}, were originally constructed for small neck sizes by Nguyen using gluing techniques~\cite{nguyen}.
Later, Hoffman-Mart\'in-White constructed tridents as semigraphical translators of arbitrary neck size $a > 0$~\cite{tridents}.
They also proved that, as $a\to\infty$, the tridents converge subsequentially to a pitchfork of width $\pi$.
They observed that if there is a unique pitchfork of width $\pi$, then one would get convergence, not merely subsequential convergence, and the limit would coincide with the pitchfork obtained as a limit of Scherkenoids of width $\pi$.
In this paper, we prove the uniqueness, and thus those consequences of uniqueness follow.

There are interesting translators that are neither graphical nor semigraphical.
Examples include:
helicoid-like translators that contain a vertical line and are invariant under a screw motion about that line~\cite{helicoidal},
surfaces obtained by gluing~\cites{davila, nguyen-reapers, nguyen-doubly, smith},
rotationally invariant annuli~\cite{clutterbuck}, and a large family~\cite{annuli} of annuli contained in slabs (and therefore not rotationally invariant).

Let us indicate the main ideas of the paper by examining the geometry of a potential yeti, in contrast with that of the pitchfork and the helicoid of Figures~\ref{fig:pitchfork} and~\ref{fig:helicoid}.
It is not hard to see that a ``yeti'' translator $\mathcal{Y}$, if it existed, would necessarily have very exotic properties. 
Notably, the Gauss map of $\mathcal{Y}$ restricted to any half-strip would come arbitrarily close to the equator of the upper hemisphere, and the gradient of the graphing function $u$ would be unbounded along every $x$-ray. Moreover, the entropy of $\mathcal{Y}$ would be either infinite or (by standard considerations, cf. \cite{gama-martin-moller-2025}) an integer; it is conjectured that the latter is impossible for non-collapsed translators.
These conditions would force very complicated geometric behavior, which we rule out using maximum principle techniques.
 
The arguments involved in proving Theorem~\ref{main-theorem} use Morse-Rad\'o theory~\cite{morse-rado}, which studies critical points and level sets for a class of functions arising in minimal surface theory.
(Recall that translators are minimal surfaces with respect to the translator metric.)
The uniqueness proofs use an angular maximum principle that may be of independent interest.
The proof of non-existence of the yeti involves comparison with a family of appropriately placed grim reaper surfaces, inspired by a technique first employed by Chini \cite{chini}.
This sliding-maximum principle allows us to derive a gradient bound for the solution $u$ in a half-space.
Then, an asymptotic limit of $\mathcal{Y}$ would be a complete graphical translator over the half-space, violating the classification Theorem of~\cite{graphs-himw}.

\subsection{Acknowledgments}

The authors thank David Hoffman for many helpful discussions and comments, as well as for his crucial role in the classification of semigraphical translators. 

\section{An Angular Maximum Principle}

In this section, we prove a version of the maximum principle that will be used to establish the uniqueness of pitchforks and helicoidal translators.

\begin{theorem}\label{angle-theorem}
Suppose that $\Omega$ is a domain contained in a half-strip 
\[
S:=[a,\infty)\times (b,c).
\]
Suppose that $u_1, u_2: \Omega\to \RR$
are solutions of the translator equation that extend continuously to $\partial\Omega$,
and suppose that
\[
(u_2-u_1)|_{\partial \Omega} = d
\]
for some constant $d$.
Suppose also that their normal vectors $\nu_i := \frac{(- D u_i , 1)}{\sqrt{1 + |D u_i|^2}}$ satisfy
\[
 \inf_{(x,y)\in \Omega \cap \{ x\ge x_0\}} \nu_i(x,y)\cdot \ee_2 > \eps > 0
\]
for some $x_0<\infty$ and $\eps$.
Then $u_2-u_1\equiv d$.
\end{theorem}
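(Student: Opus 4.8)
The plan is to convert the statement into a Phragmén--Lindelöf comparison on a half-strip, with the angular hypothesis supplying both uniform ellipticity and the growth control one needs at infinity. Since equation~\eqref{eqn:translatorequationR3} involves only $Du$ and $D^2u$, it is invariant under $u\mapsto u+\mathrm{const}$, so after replacing $u_2$ by $u_2-d$ we may assume $d=0$; the goal becomes $u_1\equiv u_2$. Put $w:=u_2-u_1$. Writing the translator operator as $\mathcal M$ and integrating $\tfrac{d}{dt}\,\mathcal M[u_1+tw]$ over $t\in[0,1]$, the identity $\mathcal M[u_1]=\mathcal M[u_2]=0$ shows that $w$ solves a linear equation
\[
Lw:=a^{ij}\,\partial_{ij}w+b^{i}\,\partial_i w=0\qquad\text{on }\Omega,
\]
where $(a^{ij})$ is the positive definite minimal-surface coefficient matrix averaged along the segment from $u_1$ to $u_2$ and there is no zeroth order term; moreover $w$ extends continuously to $\partial\Omega$ with $w|_{\partial\Omega}=0$. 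If $\Omega$ were bounded this would already finish the proof, so the entire content lies in the unbounded direction.

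Suppose $w\not\equiv0$; replacing $w$ by $-w$ if needed, assume $m:=\sup_\Omega w\in(0,+\infty]$. The strong maximum principle forbids $w$ from attaining a positive maximum at an interior point --- that would force $w\equiv m$, contradicting $w|_{\partial\Omega}=0<m$ --- so when $m<\infty$ it is not attained at all, and since $\Omega\subset\RR\times(b,c)$ has finite width in $y$, in every case there is a sequence $(x_n,y_n)\in\Omega$ with $w(x_n,y_n)\to m$ and $x_n\to+\infty$. Thus the difficulty is concentrated in $\{x\ge x_0\}$, where the angular bound holds. There, $\nu_i\cdot\ee_2\ge\eps$ says exactly that $y\mapsto u_i(x,y)$ is strictly monotone with $|\partial_y u_i|\ge\eps\sqrt{1+|Du_i|^2}$, so $\Sigma_i\cap\{x\ge x_0\}$ is a graph $y=f_i(x,z)$ over a region of the $(x,z)$-plane with the uniform Lipschitz bound $|\nabla f_i|\le\sqrt{\eps^{-2}-1}=:\Lambda$; equivalently, the $\ee_2$-translates $\{\Sigma_1+s\,\ee_2\}_{s\in\RR}$ foliate a neighborhood of $\Sigma_1$ by translators, pairwise disjoint by the monotonicity and all obeying the same transversality bound, and ``$u_1\equiv u_2$'' is the assertion that $\Sigma_2$ is the leaf $s=0$.

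In the $(x,z)$ description $f_i$ takes values in the bounded interval $(b,c)$ and solves the translator equation for $y$-graphs,
\[
\Div_{(x,z)}\!\left(\frac{\nabla f_i}{\sqrt{1+|\nabla f_i|^2}}\right)=\frac{\partial_z f_i}{\sqrt{1+|\nabla f_i|^2}},
\]
which, thanks to the bound $\Lambda$, is \emph{uniformly} elliptic on $\{x\ge x_0\}$, has no zeroth order term (it is invariant under $f\mapsto f+\mathrm{const}$), and whose linearization carries a drift of \emph{definite sign and definite size} in the $\partial_z$-direction, coming from linearizing the right-hand side. Hence $h:=f_2-f_1$ is a bounded solution of a uniformly elliptic equation $\widetilde L h=0$ with no zeroth order term and a definite $\partial_z$-drift, vanishing on the part of the boundary inherited from $\partial\Omega$. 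One then runs a Phragmén--Lindelöf argument for $x\ge x_1$ with $x_1\ge x_0$ large: using the sign of the $\partial_z$-drift, build a positive supersolution of the shape $\psi(x,z)=e^{\beta z}\bigl(1+\alpha(x-x_1)^2\bigr)$ for suitable small $\alpha,\beta>0$, growing as $x\to+\infty$; comparing $h$ with $\delta\psi$ and letting $\delta\to0$ forces $h\le0$ on $\{x\ge x_1\}$, hence $h\equiv0$ there by the symmetric argument, hence $u_1\equiv u_2$ on $\{x\ge x_1\}$, and therefore on all of $\Omega$ since translators are real analytic.

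The main obstacle --- and the reason this is a genuine theorem rather than the textbook maximum principle --- is exactly this passage to infinity: on a half-strip or half-plane the maximum principle for uniformly elliptic operators \emph{with drift} fails without a growth restriction (already $v_{xx}+v_{zz}+2\lambda v_x=0$ admits bounded nonzero solutions vanishing on a boundary line), so the angular hypothesis is indispensable --- it is what produces the uniformly elliptic reformulation and, through the finite width of the slab together with the definite $\partial_z$-drift, controls $w$ tightly enough as $x\to+\infty$ for the comparison to close. Arranging the barrier so that it dominates $h$ in \emph{every} direction of escape --- in particular on the slice $\{x=x_1\}$ and along a possible descent $z\to-\infty$ of the surfaces --- is the delicate technical point, and ruling out this ``tangency at infinity'' is the heart of the proof.
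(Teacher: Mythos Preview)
Your proposal outlines a plausible strategy but leaves the decisive step unfinished---you say yourself that ``ruling out this `tangency at infinity' is the heart of the proof,'' and that is exactly what you have not done. The barrier $\psi(x,z)=e^{\beta z}\bigl(1+\alpha(x-x_1)^2\bigr)$ cannot close the Phragm\'en--Lindel\"of comparison as written: it tends to $0$ as $z\to-\infty$, while the $(x,z)$-domain of $h=f_2-f_1$ is in general unbounded in both the $x$- and $z$-directions (for fixed $x$ the function $u_i(x,\cdot)$ may sweep out all of $\RR$, as in fact happens in the intended applications to pitchforks and translating helicoids), so $\delta\psi$ does not dominate $h$ along escape to $z=-\infty$. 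Nor have you verified that $\psi$ is a supersolution for $\widetilde L$: with a positive $\partial_z$-drift the factor $e^{\beta z}$ with $\beta>0$ contributes the wrong sign. Finally, the boundary of the common $(x,z)$-domain of $f_1$ and $f_2$ need not consist only of points arising from $\partial\Omega$, and you have not explained why $h$ is controlled on the rest of it. These are not bookkeeping issues; they are precisely the obstruction your last paragraph flags but does not resolve.

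The paper's argument sidesteps all of this by using \emph{rotational} rather than translational comparison. After reducing to $u_2>u_1$ on a domain $\Omega\subset[x_0,\infty)\times(0,c)$ with $x_0>c/\eps$, one checks directly from $\nu_i\cdot\ee_2>\eps$ and $x_0>c/\eps$ that $\partial u_i/\partial\theta<0$ in polar coordinates about the origin. One then lets $\omega(p)$ be the (well-defined, positive) angle by which one must rotate $(p,u_1(p))$ about the $z$-axis to land on $\graph(u_2)$. Since rotations about $Z$ preserve the translator equation, the strong maximum principle forbids an interior local maximum of $\omega$; but $\omega=0$ on $\partial\Omega$ and $0<\omega(p)\le\arcsin(c/|p|)\to0$ as $|p|\to\infty$, so $\omega$ attains its maximum in the interior, a contradiction. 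The key geometric point is that the \emph{angular} width of the strip, seen from the origin, decays like $c/r$, which supplies the needed decay at infinity for free---no barrier construction is required.
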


\begin{proof}
After translating the domain and replacing $u_2$ by $u_2 - d$, we can assume that $b=0$ and $d=0$.

Suppose, contrary to the theorem, that $u_2-u_1$ is not constant.
By relabelling, we may assume that there are points in $\Omega$
where $u_2>u_1$.

By the strong maximum principle,
\[
\mu:=\sup_{\overline{\Omega\cap \{x\le x_0\}}} (u_2-u_1) < \sup (u_2 - u_1).
\]
By replacing $\Omega$ by $\{(x,y)\in \Omega: u_2-u_1 > \mu\}$ and then replacing $u_2$ by $u_2 - \mu$, we can assume that $\Omega$ lies in 
\[
 [x_0,\infty)\times (0,c),
\]
that $u_2=u_1$ on $\partial \Omega$, and that $u_2\ge u_1$ on $\Omega$.
By the strong maximum principle, $u_2>u_1$ in $\Omega$.

By translating the above configuration in the $x$-direction, we can assume that $x_0$ is as large as we like.
In particular, we can assume that 
\[
 x_0 > \frac{c}{\eps}.
\]
Since $x_0>0$, we have well-defined polar coordinates $(r,\theta)$ on $\overline{\Omega}$, with $0 \le \theta<\pi/2$.
Moreover, $0 \leq r(p) \sin \theta(p) = y(p) \leq c$ for any $p \in \overline{\Omega}$.

Let $s_i=(1+|Du_i|^2)^{1/2}$.  Then
\begin{align*}
\pdf{u_i}{\theta}
&=
\pdf{u_i}{x}\pdf{x}{\theta} + \pdf{u_i}{y} \pdf{y}{\theta}
\\
&=
\pdf{u_i}{x} (-y) + \pdf{u_i}{y} x
\\
&= 
-  s_i \, \nu_i \cdot (-y, x,0)
\\
&=
  s_i ( y \,\nu_i\cdot \ee_1 - x\, \nu_i\cdot \ee_2)
\\
&\le
 s_i ( c - x_0 \eps)
\\
&<0.
\end{align*}

Now we define a function $\omega: \Omega\to \RR$ as follows.
If $p\in \Omega$, let the circular arc $\mathcal{C}_p$ be the connected component of
\[
\Omega\cap \partial B(0, |p|)
\]
containing $p$.

Since $u_1=u_2$ at the endpoints of $\mathcal{C}_p$ and since $\partial u_i / \partial \theta<0$ at all points of $\mathcal{C}_p$, we see that $u_1$ and $u_2$ map $\mathcal{C}_p$ diffeomorphically to the same interval.
Thus there is a unique $q\in \mathcal{C}_p$ such that $u_2(q)=u_1(p)$.
We define
\begin{equation}\label{eqn:omega-p,q-definition}
\omega(p) = \theta(q) -\theta(p).
\end{equation}
Since $u_2>u_1$, $\omega(p)>0$.

Now $\omega$ is a smooth function with the following geometric interpretation.
If $p\in \Omega$, then $\omega(p)$ is the smallest positive angle such that
\[
 \Rr_{\omega(p)}(p,u_1(p)) \in \graph(u_2),
\]
where $\Rr_\theta:\RR^3\to \RR^3$ denotes counterclockwise rotation by $\theta$
about the $z$-axis.

Note that $\omega$ extends continuously to $\overline{\Omega}$ by setting $\omega=0$ on $\partial \Omega$.
By the strong maximum principle, $\omega(\cdot)$ cannot have an interior local maximum.

For $p,q$ as in~\eqref{eqn:omega-p,q-definition}, we have $x(p), x(q) >0$ and $0\le y(p), y(q)\le c$, thus
\[
 \theta(p)\ge 0, \qquad r(p) = r(q).
\]
We conclude that for every $p \in \Omega$ with $\omega(p)= \theta(q) - \theta(p)$,
\begin{align*}
 0 < \omega(p) &\leq \theta(q) = \arcsin( r(q)^{-1}c) = \arcsin ( r(p)^{-1} c).
\end{align*}
Thus $\omega(p)\to 0$ as $r(p)\to\infty$,
so $\omega(\cdot)$ attains its maximum.
But that contradicts the strong maximum principle.
\end{proof}

\section{Uniqueness for Pitchforks}\label{pitchfork-section}

\begin{theorem}\label{pitchfork-theorem}
Let $w\ge \pi$.
Up to an additive constant, there exists a unique translator
\[
f: \RR\times (0,w)\to \RR
\]
such that $f$ has boundary values 
\begin{align*}
f(x,w) &= -\infty,
\\
f(x,0) &=
\begin{cases}
-\infty &\text{if $x<0$}, \\
\infty &\text{if $x>0$},
\end{cases}
\end{align*}
and such that $M:=\graph(f)\cup Z$ is a smooth manifold-with-boundary,
 where $Z$ is the $z$-axis.
\end{theorem}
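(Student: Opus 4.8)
The plan is to separate existence from uniqueness. Existence is essentially in~\cite{scherk-like}: for every $w\ge\pi$ a pitchfork of width $w$ is produced there as a subsequential limit, via the compactness theorem~\cite{scherk-like}*{Theorem 10.1}, of Scherkenoids of width $w$ as the base angle $\alpha\to\pi$, and one checks in that construction that the limit $M=\graph(f)\cup Z$ is a smooth manifold-with-boundary with the asserted boundary values. So the real content of Theorem~\ref{pitchfork-theorem} is the uniqueness clause: any two translators $f_1,f_2$ as in the statement satisfy $f_1-f_2\equiv\mathrm{const}$.

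For uniqueness, observe first that on the set where both are finite $v:=f_1-f_2$ solves a linear, uniformly elliptic equation $a^{ij}\partial_{ij}v+b^i\partial_i v=0$ on $\Omega_0:=\RR\times(0,w)$, with \emph{no} zeroth-order term, since the translator operator $f\mapsto\Div(Df/\sqrt{1+|Df|^2})+1/\sqrt{1+|Df|^2}$ depends on $f$ only through $Df$. Hence it suffices to prove that $v$ is bounded and that, after subtracting one suitable constant from $f_2$, $v\to0$ along every end of $\Omega_0$: for then $\sup v$ and $\inf v$ cannot be nonzero (a nonzero extremum would be attained at an interior point, where the strong maximum principle would force $v$ constant, contradicting the decay at the ends), so $v\equiv0$ and $f_1-f_2$ equals the subtracted constant.

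The substance of the argument is therefore the asymptotic analysis of the ends of $M_i=\graph(f_i)\cup Z$, namely: the two sub-arcs $\{y=0,\ x<0\}$ and $\{y=0,\ x>0\}$, the arc $\{y=w\}$, the line $Z$, and the two horizontal ends $x\to-\infty$ and $x\to+\infty$. Near each finite boundary arc carrying $\pm\infty$ data, $f_i$ blows up logarithmically and $M_i$ is asymptotic to a standard translating model determined only by $w$ — a grim-reaper cylinder along $\{y=0\}$ and $\{y=w\}$; along $Z$, the smooth-manifold-with-boundary hypothesis forces $M_i$ to be asymptotic to a half-helicoid-type sheet tangent to the plane $\{x=0\}$ along $Z$ — and the end $x\to-\infty$ (width $\ge\pi$, $-\infty$ data on both sides) is asymptotic to a controlled ``trough'' escaping downward. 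Because $M_i$ is connected and each of these models is rigid up to vertical translation, after subtracting one constant from $f_2$ we may assume that $v$ extends continuously by $0$ to every one of them: $v\to0$ along $\{y=0\}$, $\{y=w\}$, $Z$, and as $x\to-\infty$. The one end where this reasoning does \emph{not} apply is $x\to+\infty$, where $f_i$ passes from $+\infty$ at $y=0$ to $-\infty$ at $y=w$; there the graphs are steep in the $y$-direction and vertical comparison cannot exclude that $f_1$ and $f_2$ differ near that end by a rotation about $Z$ composed with a vertical translation, which is exactly what the angular maximum principle, Theorem~\ref{angle-theorem}, is designed to rule out. From the asymptotics one verifies the hypothesis $\inf\{\nu_i\cdot\ee_2:\ x\ge x_0\}>\eps>0$ on a half-strip $[a,\infty)\times(0,w)$, and then argues that $\sup_{\{x>a\}}v\le\max_{\{x=a\}}v$: otherwise, choosing $c$ strictly between these numbers, the set $\Omega:=\{(x,y):x>a,\ v>c\}$ is nonempty, stays away from the infinite-data arcs (since $v\to0<c$ there) and from $\{x=a\}$, and has $v\equiv c$ on $\partial\Omega$, so Theorem~\ref{angle-theorem} forces $v\equiv c$ on $\Omega$, a contradiction. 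Hence $\sup v=\sup_{\{x\le a\}}v$; combined with $v\to0$ along the other ends, $\sup v$, if positive, is attained at an interior point of $\Omega_0$, where the strong maximum principle makes $v$ constant — contradicting $v\to0$ at the ends. Thus $\sup v\le0$, symmetrically $\inf v\ge0$, so $v\equiv0$ and $f_1=f_2$.

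I expect the main obstacle to be precisely the asymptotic analysis — pinning down the model surface at each end, especially the half-helicoid-type model along $Z$ and the correct model as $x\to+\infty$, and proving the rigidity that lets a single vertical translation work at all the tame ends simultaneously — together with the correct deployment of Theorem~\ref{angle-theorem} at the steep end (the choice of the region $\Omega$ avoiding the infinite-data arcs is the delicate point). Morse-Rad\'o theory enters this end analysis and the control of the level sets of $v$ and of $f_i$ used there; once these ingredients are in place, the maximum-principle steps are routine.
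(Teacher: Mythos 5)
There is a genuine gap, and it sits exactly where you park the difficulty: the asymptotic analysis you defer is not a technical afterthought but is essentially equivalent to the theorem, and it is not available from the cited sources. Your plan needs, after subtracting a \emph{single} constant, that $v=f_1-f_2$ tend to $0$ along the arcs $\{y=0,x<0\}$, $\{y=0,x>0\}$, $\{y=w\}$, along $Z$, and as $x\to-\infty$. On each arc carrying infinite boundary data both $f_1$ and $f_2$ blow up, and nothing in \cite{scherk-like} (nor in the facts (F1)--(F3) the paper quotes) gives asymptotics sharp enough to conclude that their difference converges at all, let alone to $0$; ``the model is rigid up to vertical translation'' would at best normalize $v$ end by end, and the claim that one constant works at every end simultaneously is precisely the uniqueness statement you are trying to prove, so the argument is circular at its core. (The boundedness of $v$, which your maximum-principle step also needs, is likewise unproven.) The guessed models themselves --- a ``half-helicoid-type sheet'' along $Z$, a ``trough'' as $x\to-\infty$ --- are not established anywhere and are not needed.

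The paper's proof avoids this entire issue by a different device: it compares $f_1(p)$ not with $f_2(p)$ but with a translate $f_2(p+v)$ where $y(v)\neq 0$. Equality of the Gauss map images (F1) produces a point where $Df_1(q)=Df_2(q')$, hence a critical point of $p\mapsto f_1(p)-f_2(p+v)$ for $v=q'-q$, and Morse--Rad\'o theory \cite{morse-rado} (lower semicontinuity of the number of critical points) lets one perturb $v$ so that $y(v)>0$. The shifted difference $g$ then lives on a strictly thinner strip $\RR\times(0,b)$ where its boundary values are honestly $\pm\infty$ (one function is finite where the other is infinite), so no refined boundary asymptotics are needed. The contradiction comes from the nodal set of $g$ through the critical point: it is a tree with at least four ends, at most one end can go to $(0,0)$ (smoothness of $M$ along $Z$), at most one to $-\Infty$ (via (F3) and strict monotonicity of $\psi$, which is where the shift $y(v)>0$ is used again), so two ends go to $+\Infty$, and the region they bound violates the angular maximum principle (Theorem~\ref{angle-theorem}) together with (F2). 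Your instinct that Theorem~\ref{angle-theorem} is the tool for the end $x\to+\infty$ is correct, and your region $\Omega=\{x>a,\ v>c\}$ is in the right spirit, but without the shift and the Morse--Rad\'o/nodal-tree structure the rest of your argument has no support. To repair your route you would have to prove the refined end asymptotics and the simultaneous normalization yourself, which is a substantially harder (and currently unsubstantiated) undertaking than the paper's argument.
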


Here, $M$ is half of a pitchfork,
as in Fig.~\ref{fig:pitchfork} (viewed
 from a point in the first octant).
 The other half, also shown
 in Fig.~\ref{fig:pitchfork},
 is obtained by rotating $M$ by $\pi$ about $Z$.

\begin{remark}\label{pitchfork-remark}
The hypothesis that $M$ is a smooth
manifold-with-boundary is not necessary;
it is implied by the other hypotheses.
See Proposition~\ref{regularity}.
\end{remark}

The hypothesis $w\ge \pi$ is 
necessary: if $w<\pi$, then no such $f$
 exists~\cite{scherk-like}*{Theorem 12.1}.

\begin{proof}
Existence was proved in \cite{scherk-like}*{\S 12}, so it suffices to prove uniqueness.
To prove uniqueness, it will be convenient to add two points $+\Infty$ and $-\Infty$ at infinity
to $\RR^2$ such that, for $p_i\in \RR^2$,
\begin{equation}\label{infinity}
\begin{aligned}
&\text{$p_i\to +\Infty$ if and only if $x(p_i)\to\infty$, and} \\
&\text{$p_i\to-\Infty$ if and only if $x(p_i) \to -\infty$}.
\end{aligned}
\end{equation}

The proof of uniqueness uses the following three facts (see~\cite{scherk-like}*{Theorem 12.1}) about solutions $f$:
\begin{enumerate}[\upshape (F1).]
\item\label{pitchfork-fact-1}
The Gauss map image of $\graph(f)$ depends only on the width $w$.
\item\label{pitchfork-fact-2}
If $\nu(x,y)$ is the upward unit normal to the graph of $f$ at $(x,y,f(x,y))$, then
\[
   \nu(p)\cdot \ee_2 \to 1
\]
as $p\to +\Infty$.
\item\label{pitchfork-fact-3}
There is a strictly decreasing function $\psi:[0,w]\to [\infty,-\infty]$, depending only on $w$, such that
if $x_i\to -\infty$ and $y_i\to y\in [0,w]$, then 
\[
     \pdf{f}y(x_i,y_i) \to \psi(y).
\]
\end{enumerate}

Fact~(F\ref{pitchfork-fact-3}) is based on the fact that the function 
\[
  (x,y) \mapsto f(\bar x + x, y) - f(\bar x,0)
\]
converges to a tilted grim reaper over $\RR\times (0,w)$ as $\bar x\to -\infty$.  
In particular, 
\[
  \psi(y) = \frac{\partial}{\partial y} \left( \left( \frac{w}{\pi} \right)^2 \log(\sin ((\pi/ w)y) - x \sqrt{\frac{w^2}{\pi^2} -1} \right) = \frac{w}{\pi} \frac{\cos( (\pi/w)y)}{\sin((\pi/w)y)}.
\]

Suppose, contrary to the theorem, that there exist two solutions $f_1$ and $f_2$
such that $f_1-f_2$ is not constant.
Let $M_i=\graph(f_i)\cup Z$.

\begin{claim}\label{choosing-claim}
There exists a $v\in \RR\times (-w,w)$ such that the function
\[
g(p) :=  f_1(p) - f_2(p+v).
\]
has a critical point.
Moreover, we can choose $v$ so that $y(v)\ne 0$.
\end{claim}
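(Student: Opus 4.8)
The plan is to translate one graph relative to the other and look for a value of the translation vector $v$ for which the difference $g_v(p) := f_1(p) - f_2(p+v)$ is forced to have an interior critical point. First I would dispose of the degenerate possibility that \emph{every} admissible translate $g_v$ is free of critical points. The key is a boundary/asymptotic bookkeeping argument: using the three facts (F1)--(F3) from \cite{scherk-like}*{Theorem 12.1}, I would track the behavior of $f_1(p) - f_2(p+v)$ on $\partial(\Omega_1\cap(\Omega_2-v))$ and near $+\Infty$ and $-\Infty$, where $\Omega_i$ is the domain of $f_i$. Near $+\Infty$ the normals of both graphs satisfy $\nu\cdot\ee_2\to 1$ by (F2), so the angular maximum principle (Theorem~\ref{angle-theorem}) applies on the half-strip: if $g_v$ had no critical point on a half-strip portion of the overlap domain, the difference would be genuinely controlled there, and $g_v$ could only attain extrema on the remaining, bounded-in-$x$ part of the boundary. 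Near $-\Infty$, fact (F3) says each $f_i$ is asymptotic to a fixed tilted grim reaper $G$ over $\RR\times(0,w)$ (up to vertical translation), so $f_1(p) - f_2(p+v)$ behaves like $G(p) - G(p+v)$ plus a vanishing error; for a horizontal translate this is asymptotically a linear function of $p$ whose gradient I can compute from the explicit formula for $\psi$, hence nonzero and of a definite sign unless the translate is trivial.

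The heart of the argument is then a continuity/degree-type sweep. I would consider the two-parameter family $v\mapsto g_v$ and argue that as $v$ ranges over $\RR\times(-w,w)$, the "asymptotic slope" of $g_v$ at $-\Infty$ changes sign, or more precisely that one cannot consistently orient $g_v$ so that it has no critical point for all $v$. Concretely: since $f_1 - f_2$ is assumed non-constant, there is some point where $f_1 - f_2$ differs from its value at another point; sliding $v$ continuously from a vector that makes $g_v$ "lean one way" to one that makes it "lean the other way" near $-\Infty$ (exploiting the monotonicity of $\psi$ and the freedom in the grim-reaper translation constant), the sup and inf of $g_v$ over the overlap domain must cross, and at the crossing value of $v$ the extremum is attained at an interior point rather than on the boundary or at infinity. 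That interior extremum is the desired critical point. The normalization $y(v)\ne 0$ is then arranged separately: if the only critical points occur for $v$ with $y(v)=0$, I would perturb $v$ in the $\ee_2$-direction and use that the boundary data of $f_1$ and $f_2(\cdot+v)$ no longer match on the segment $\{x>0,\,y=0\}$ versus $\{x<0,\,y=0\}$ — the mismatch of the $\pm\infty$ boundary values forces $g_v$ to take both large positive and large negative values near $(0,0)$, and combined with the $-\Infty$ asymptotics this again traps an interior extremum. Alternatively, one observes that horizontal translates ($y(v)=0$) are ruled out directly because then $g_v$ is a difference of solutions over a genuine strip and the grim-reaper asymptotics at \emph{both} ends, together with (F1), give a contradiction unless $g_v$ is monotone, so the relevant $v$ must have $y(v)\ne 0$.

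The main obstacle I anticipate is making the sweep rigorous at the two ends simultaneously: near $+\Infty$ the angular maximum principle gives good control, but near $-\Infty$ the overlap domain $\Omega_1\cap(\Omega_2-v)$ is a possibly irregular region pinched between two grim-reaper-asymptotic graphs over slightly different strips $\RR\times(0,w)$ and $\RR\times(-y(v),w-y(v))$, so I must be careful that the boundary of the overlap does not itself run off to $-\Infty$ in a way that lets the extremum "escape." Handling this requires a quantitative version of (F3) — uniform convergence to the tilted grim reaper on compact-in-$y$, $x\to-\infty$ regions — so that the asymptotic linear model genuinely dominates the error and pins the location of any near-maximal point to a bounded region of $x$. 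Once the extremum is confined to a compact set, the strong maximum principle turns it into an interior critical point, completing the claim.
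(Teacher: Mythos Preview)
Your proposal misses the key idea entirely and is far more complicated than necessary. The paper's proof of this claim is essentially one line: Fact~(F1) says the Gauss map images of $\graph(f_1)$ and $\graph(f_2)$ coincide, so for any point $q\in\RR\times(0,w)$ there is a point $q'\in\RR\times(0,w)$ with $Df_2(q')=Df_1(q)$. Setting $v=q'-q$, the function $g_v(p)=f_1(p)-f_2(p+v)$ then has a critical point at $p=q$ by construction, since $Dg_v(q)=Df_1(q)-Df_2(q+v)=0$. That is the whole existence argument; no asymptotics, no sweep, no angular maximum principle are used at this stage. To arrange $y(v)\ne 0$, the paper invokes the lower semicontinuity of the critical point count from Morse--Rad\'o theory (\cite{morse-rado}*{Corollary~40}): the number of critical points of $g_v$, counted with multiplicity, is lower semicontinuous in $v$, so any $v$ sufficiently close to $q'-q$ also works, and in particular one with $y(v)\ne 0$.

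Your approach, by contrast, tries to trap an interior extremum of $g_v$ via a continuity argument in $v$, controlling the ends with (F2), (F3), and Theorem~\ref{angle-theorem}. Even if this could be made to work, it is circular in spirit: the angular maximum principle is the tool the paper uses \emph{after} this claim to derive a contradiction, not to establish the claim itself. More seriously, your sweep is not rigorous as written: you never specify a continuous one-parameter family of $v$'s along which a well-defined scalar quantity changes sign, and you correctly identify but do not resolve the obstacle that the overlap domain is noncompact at $-\Infty$ with a shifting strip, so an extremum of $g_v$ may fail to be attained. The ``alternative'' paragraph about ruling out $y(v)=0$ via monotonicity is also unjustified. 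The fix is simply to use (F1) directly: it hands you the critical point for free.
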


Of course, the domain of $g$ is the intersection
of the domains of $f_1$ and of $f_2$.

\begin{proof}
The Gauss map images of the graphs of $f_1$ and $f_2$ are the same.
Thus if $q\in \RR\times(0,w)$, then there is a $q'\in \RR\times (0,w)$
such that $Df_2(q')=Df_1(q)$.
Hence, if $v=q'-q$, then the function
\begin{equation}\label{difference-function}
   f_1(p) - f_2(p+v)
   \tag{$\star$}
\end{equation}
has a critical point at $q$.
Note that a critical point
of~\eqref{difference-function}
is equivalent to a point 
 of tangency between the graph of $f_2(p+v)$
 and the foliation whose leaves are vertical translates of $\graph(f_1)$.
It follows,
 by~\cite{morse-rado}*{Corollary~40},
that
the function~\eqref{difference-function}
has a critical point for all $v$ 
sufficiently close to $q'-q$.  
(Indeed, the number of critical points,
 counting multiplicity, 
 of~\eqref{difference-function}
 is a lower-semicontinuous function of $v$.)
In particular,
there exists such a $v$ with $y(v)\ne 0$.
\end{proof}
By relabeling $f_1$ and $f_2$, if necessary, we
can assume
 that $y(v)>0$ in Claim~\ref{choosing-claim}.
Thus
 we have a translator $u_1=f_1$ on $\RR\times(0,w)$,
and a translator 
\[
   u_2: \RR\times (-y(v),w- y(v)) \to \RR
\]
given by $u_2(p)=f_2(p+v)$.
Note that $g:=u_1-u_2$ is defined over the strip 
\[
  \RR\times (0,b),
\]
where $b=w-y(v)$.
Note that $0<b<w$, and that $g$
has boundary values
\begin{align*}
g(x,0) &= -\infty \quad\text{for $x<0$}, \\
g(x,0) &= \infty \quad\text{for $x>0$, and} \\
g(x,b) &= \infty \quad\text{for $x\in \RR$}.
\end{align*}
Also, $g$ has a critical point $p$.  By adding a constant to $f_1$, we can assume that $g(p)=0$.

For $i=1,2$, note that if $\nu_i$ is the upward unit normal to graph of $u_i$,
then
\begin{equation}\label{hypothesis-ok}
 \text{$\nu_i(p)\to \ee_2$ as $p\to +\Infty$},
\end{equation}
by Fact~(F\ref{pitchfork-fact-2}) above.

Let $C$ be a component of the zero set of $g$ that 
contains the critical point $p$.
Then $C$ has the structure of a network in which $p$ is a node, and in which the valence of each node is an even number $\ge 4$.
(See, for example, \cite{colding-minicozzi}*{Theorem~7.3}.)
By the maximum principle, $C$ contains no closed
loops, so it is a tree with at least $4$ ends.
From the boundary values of $g$, we see that each end tends to $-\Infty$, to $+\Infty$, or to $(0,0)$.

Since $M_1:=\graph(u_1)\cup Z$ is a smooth manifold-with-boundary, we see that there is a small disk $D\subset \RR^2$
centered at $(0,0)$ such that $D\cap g^{-1}(0)$ consists
of a single curve $\gamma$ joining $(0,0)$ to a point on $\partial D$.
Thus at most one end of $C$ tends to $(0,0)$.
(There might be no such end since $\gamma$ might lie
 a component of $g^{-1}(0)$ other  than $C$.)

We claim that there is an $\tilde{x}< 0$ such that
\begin{equation}\label{g-increase}
  \pdf{g}{y}(x,y) >  0 
  \quad\text{on $(-\infty,\tilde x] \times (0,b)$}.
\end{equation}
For if not, there would be $(x_i,y_i)$ with $x_i\to -\infty$ such that
\[
   \pdf{g}{y}(x_i,y_i) \le 0.
\]
Passing to a subsequence, we can assume that $y_i\to y\in [0,b]$, and therefore
\begin{align*}
 0 &\ge
   \pdf{g}y(x_i,y_i) \\
   &= \pdf{u_1}y(x_i,y_i) - \pdf{u_2}y(x_i,y_i) \\
   &\to \psi(y) - \psi(y+y(v)) \\
   &> 0,
\end{align*}
since $\psi$ is strictly decreasing, a contradiction.
Thus, there is an $\tilde x$ for 
  which~\eqref{g-increase} holds.
Hence, for $x\le \tilde x$, there is at most one $y$ such that $(x,y)\in C$.
Thus, at most one end of $C$ tends to $-\Infty$.

Since at most one end of $C$ tends to $-\Infty$
and at most one end tends to $(0,0)$, 
at least two ends must tend to $+\Infty$.
Hence, $C$ contains two embedded curves $\alpha$ and $\beta$ that join $p$ to $+\Infty$ and that are disjoint except at $p$.
  Let $\Omega$ be the open region bounded by $\alpha\cup\beta$.
Then $g=0$ on $\partial \Omega$ and $g$ does not vanish anywhere on $\Omega$.
But, by~\eqref{hypothesis-ok}
and the angular maximum principle
(Theorem~\ref{angle-theorem}), $g\equiv 0$ on $\Omega$, a contradiction.
\end{proof}

\section{Uniqueness for Translating Helicoids}\label{helicoid-section}

\begin{theorem}\label{helicoid-theorem}
Let $w< \pi$.  Up to an additive constant, there exists a 
 unique translator 
\[
   f: \RR\times (0,w)\to \RR
\]
with the following property. There exists an $a\in \RR$ such that
$f$ has boundary values
\begin{align*}
 f(x,0) &=
 \begin{cases}
 \infty &\text{if $x<0$}, \\
 -\infty &\text{if $x>0$},
 \end{cases}
 \\
 f(x,w)
 &=
 \begin{cases}
-\infty &\text{if $x<a$}, \\
 \infty &\text{if $x>a$},
 \end{cases}
\end{align*}
and such that $M:=\graph(f)\cup Z \cup L$
is a smooth manifold-with-boundary, 
  where $L$ is the vertical line through $(a,w)$.
\end{theorem}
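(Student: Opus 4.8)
The plan is to follow the scheme of the proof of Theorem~\ref{pitchfork-theorem}. Existence is in \cite{scherk-like}*{\S 12}, so only uniqueness is at issue. As for pitchforks, I would first compactify $\RR^2$ by adjoining two points $\pm\Infty$ with $p_i\to\pm\Infty$ iff $x(p_i)\to\pm\infty$, and I would invoke the counterparts for translating helicoids --- established in \cite{scherk-like} --- of Facts (F\ref{pitchfork-fact-1})--(F\ref{pitchfork-fact-3}): that the Gauss map image of $\graph(f)$ depends only on $w$; that the upward unit normal $\nu$ satisfies $\nu\to\ee_2$ as $x\to-\infty$ and $\nu\to-\ee_2$ as $x\to+\infty$ (the boundary data ``rotates'' across the strip, so the two ends are asymptotic to one another rather than to a grim reaper); and a description of the asymptotics of $f$ as $x\to\pm\infty$ fine enough to control, for $|x|$ large, the sign and the number of sign changes of the difference of two solutions near each edge of the strip.

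Assuming two solutions $f_1,f_2$ (with parameters $a_1,a_2$) with $f_1-f_2$ nonconstant, I would use equality of the Gauss map images to produce, for a suitable $q$, a vector $v$ with $Df_2(q+v)=Df_1(q)$, so that $f_1(\cdot)-f_2(\cdot+v)$ has a critical point; by the lower semicontinuity of the critical-point count \cite{morse-rado}*{Corollary~40} I may perturb $v$ to have $y(v)\ne 0$, and after relabelling assume $y(v)>0$. Then $g:=f_1(\cdot)-f_2(\cdot+v)$ lives on the strip $\RR\times(0,b)$ with $0<b=w-y(v)<w$; it jumps from $+\infty$ to $-\infty$ at $(0,0)$ on the edge $y=0$ (from $f_1$'s line $Z$) and at a point $(\tilde a,b)$ on the edge $y=b$ (from the translate of $f_2$'s vertical line), has no other boundary singularities in the strip (the remaining relevant vertical lines lie outside $\RR\times(0,b)$), has a critical point $p$, and can be normalized by an additive constant so that $g(p)=0$.

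Next I would analyze the component $C$ of $g^{-1}(0)$ containing $p$ exactly as in the pitchfork proof: $g$ is a Morse--Rad\'o function, so $C$ is a tree (a loop would bound a region on which $g\equiv 0$ by the strong maximum principle) with nodes of even valence $\ge 4$, hence with an even number $\ge 4$ of ends, each tending to $+\Infty$, $-\Infty$, $(0,0)$, or $(\tilde a,b)$. Smoothness of $M_1=\graph(f_1)\cup Z$ along $Z$ forces $g^{-1}(0)$ to be a single arc near $(0,0)$ (there $u_1:=f_1$ blows up and $u_2:=f_2(\cdot+v)$ is finite, so $g^{-1}(0)=\{u_1=u_2\}$ locally), so at most one end of $C$ reaches $(0,0)$; symmetrically, smoothness of $\graph(u_2)$ together with the translate of $f_2$'s line gives at most one end at $(\tilde a,b)$. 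Thus at least two ends of $C$ reach $\pm\Infty$. From the asymptotic description one gets that for $|x|$ large $g(x,\cdot)$ has constant sign near each edge and boundedly many sign changes ($g<0$ near both edges as $x\to+\infty$, $g>0$ near both as $x\to-\infty$), so a handshake/parity count over $\{\pm x\ge R\}$ shows the number of ends of $C$ at $+\Infty$ and the number at $-\Infty$ are each even; hence at least two ends reach the \emph{same} point of $\{+\Infty,-\Infty\}$, say $+\Infty$. Joining two such ends by an arc of $C$ bounds a region $\Omega\subset\RR\times(0,b)$ with $g\equiv 0$ on $\partial\Omega$ and $g\ne 0$ inside, and the angular maximum principle (Theorem~\ref{angle-theorem}) --- applied after the reflection $y\mapsto b-y$, under which the normal-limit fact gives $\nu_i\cdot\ee_2\to 1$ over a half-strip --- forces $g\equiv 0$ on $\Omega$, a contradiction. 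Hence $f_1-f_2$ is constant and $a_1=a_2$.

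The hard part will be the asymptotic analysis of $f$ as $x\to\pm\infty$ and the resulting control of the ends of $C$: for pitchforks one end is asymptotic to a tilted grim reaper, whose strictly monotone slope profile immediately bounds the number of zero-set ends there, whereas for translating helicoids the ``rotating'' boundary data precludes a grim-reaper --- indeed any simple graphical --- limit at either end, so the bound must instead come from the finer description in \cite{scherk-like} together with the parity count. The remaining ingredients --- the Morse--Rad\'o structure of $g^{-1}(0)$, the single-arc property at the two boundary singular points, and the closing application of the angular maximum principle --- should go through essentially as in the pitchfork case.
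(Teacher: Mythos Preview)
Your setup is essentially the paper's: the shift $v$ with $y(v)>0$ via the common Gauss image and Morse--Rad\'o lower semicontinuity, the difference $g=u_1-u_2$ on $\RR\times(0,b)$, the tree structure of the component $C$ of $g^{-1}(0)$ through the critical point, and the single-arc behavior at each of the two finite boundary singularities. (Two small corrections: existence is in \cite{scherk-like}*{\S11}, not \S12, and no analog of Fact~(F\ref{pitchfork-fact-3}) is needed here---the paper uses only (P1) and (P2).)

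The genuine gap is your parity step. The observation that $g<0$ near both edges for $x\gg 0$ (and $g>0$ for $x\ll 0$) gives that the \emph{total} number of zeros of $g(R,\cdot)$ is even; it says nothing about how many of those zeros lie on the particular component $C$. Equivalently, for $R$ large and generic, $C\cap\{x\ge R\}$ is a proper $1$--manifold, so $|C\cap\{x=R\}|$ and the number of ends of $C$ at $+\Infty$ have the same parity; but there is no reason $|C\cap\{x=R\}|$ should be even, since other components of $g^{-1}(0)$ may account for some of the zeros on $\{x=R\}$. So you cannot conclude that $C$ has an even number of ends at either infinity, and hence cannot force two ends at the same $\pm\Infty$.

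The paper closes the argument differently, and this is the step you are missing. Using Property~(P\ref{nu-property}) one applies the angular maximum principle (after the obvious reflection so that $\nu_i\cdot\ee_2\to 1$) to show directly that \emph{at most one} end of $C$ tends to $+\Infty$ and at most one to $-\Infty$. Together with the single-arc bounds at $(0,0)$ and $(a,b)$ and the fact that $C$ has at least four ends, one gets exactly one end at each of the four targets. In particular $C$ contains an arc $C'$ from $(0,0)$ to $(a,b)$. Let $U$ be the component of $(\RR\times(0,b))\setminus C'$ with $x$ bounded below; since $C$ also has an end at $+\Infty$, there are points of $\{g=0\}$ in $U$, hence by the strong maximum principle points where $g>0$. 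Setting $\Omega=U\cap\{g>0\}$, the boundary rays $\{(x,0):x>0\}$ and $\{(x,b):x>a\}$ carry $g=-\infty$ and so cannot meet $\partial\Omega$, whence $g\equiv 0$ on $\partial\Omega$; a second application of the angular maximum principle on this half-strip region gives the contradiction. Your plan should replace the parity count by this ``exactly one end at each target, then curve $C'$ and region $\Omega$'' endgame.
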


Here, $M$ is a fundamental piece of a translating 
helicoid, the rest of which is obtained from $M$ by iterated Schwarz reflections,
as in Fig.~\ref{fig:helicoid}.
 The hypothesis $w<\pi$ is 
 necessary: if $w\ge \pi$, then no
 such $f$ 
 exists~\cite{scherk-like}*{Theorem 11.1}.

\begin{remark}\label{helicoid-remark}
The hypothesis that $M$ is a smooth manifold-with-boundary
 is not necessary;
it is implied by the other hypotheses.
See Proposition~\ref{regularity}.
\end{remark}

\begin{proof}
Existence was proved in \cite{scherk-like}*{\S 11}, so it suffices to prove uniqueness.
As proved there, solutions $f$ have the following properties:
\begin{enumerate}[\upshape (P1).]
\item The Gauss map image of the graph of $f$ depends only on $w$.
\item\label{nu-property} $\nu(p)$ converges to $\ee_2$ as $p\to -\Infty$ and to $-\ee_2$ as $p\to +\Infty$.
\end{enumerate}
Here $\pm \widehat\infty$ are the points at $x=\pm \infty$ given by~\eqref{infinity}.

Suppose, contrary to the theorem, that there exist two solutions $f_1$ and $f_2$.
Claim~\ref{choosing-claim} in the proof 
of Theorem~\ref{pitchfork-theorem} also 
holds here 
 (with exactly the same proof).
That is,
 there exists a $v\in \RR\times(0,w)$
with $y(v) \ne 0$ such that the function 
\[
   g(q) = f_1(q) - f_2(q+v)
\]
has a critical point $p$. 
By relabeling, if necessary, we can assume
that $y(v)>0$.
Note that
$g$ is defined over a strip $\RR\times (0,b)$, where $0<b<w$.
We may assume (by adding a constant to $f_1$) that $g(p)=0$.

Note that $g$ has boundary values
\[
g(x,0) 
=
\begin{cases}
\infty &\text{if $x<0$}, \\
-\infty &\text{if $x>0$}
\end{cases}
\]
and
\[
g(x,b) 
=
\begin{cases}
\infty &\text{if $x<a$}, \\
-\infty &\text{if $x>a$}.
\end{cases}
\]

Let $C$ be the connected component of $g^{-1}(0)$ containing $p$.
Exactly as in the proof of Theorem~\ref{pitchfork-theorem}, $C$ is a tree with at least four ends,
and each end tends to $(0,0)$, $(a,b)$, $-\Infty$, or $+\Infty$.
Also, as in that proof, at most one end tends to each of those points.
(In the cases of $\pm \Infty$, this is by
 Property~(P\ref{nu-property}) and
 the angular maximum
 principle, Theorem~\ref{angle-theorem}.)
Thus, exactly one end tends to each of those points. 
Consequently, $C$ contains a curve $C'$ that joins $(0,0)$ to $(a,b)$.

Let $U$ be the component
of $(\RR\times (0,b))\setminus C'$
on which $x$ is bounded below:
\[
  \inf_U x(\cdot) > -\infty.
\]
Since $C$ has an end tending
 to $+\widehat\infty$,
 there are points in $U$ at which $g=0$.
 By the strong maximum principle, there are also points at which $g>0$.
Thus the set
\[
 \Omega := U\cap \{g>0\}
\]
is nonempty.  Now $\partial \Omega$
contains no points on
 the rays $\{(x,0): x>0\}$
 and $\{(x,b): x>a\}$, since $g\equiv -\infty$
on those rays.  Hence $g\equiv 0$ 
on $\partial \Omega$.  But by Property~(P\ref{nu-property}) and
the angular maximum
principle (Theorem~\ref{angle-theorem}), 
$g\equiv 0$
on $\Omega$, a contradiction.
\end{proof}

\section{Nonexistence of The Yeti}\label{yeti-section}

We will use the following facts about tilted grim reaper translators:
\begin{lemma}\label{reaper-lemma}
Given a vector $v\in\RR^2$, there is an $a\ge \pi/2$, a tilted grim reaper
\[
   f: \RR\times (-a,a) \to \RR,
\]
and  a $y\in (-a,a)$ such that
\begin{align*}
D f(x,y) = v
\end{align*}
for all $x$.
If $|y| \le \pi/4$, then $|\frac{\partial f}{\partial y}(x,y)| \le 1$.
\end{lemma}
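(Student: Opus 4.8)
The plan is to reduce the lemma to the explicit closed form of tilted grim reapers together with one elementary inequality. Recall (see \cite{graphs-himw}, or separate variables in the ODE for a cylindrical translator) that, up to a vertical translation, each tilted grim reaper which is a graph over a strip $\RR\times(-a,a)$ has the form
\[
 f(x,y) = A\log\cos\!\left(\frac{y}{\sqrt A}\right) + \ell\, x, \qquad A := 1+\ell^2,
\]
for some $\ell\in\RR$, with $\sqrt A = 2a/\pi$; equivalently $a = \tfrac{\pi}{2}\sqrt{1+\ell^2}\ge \pi/2$, where $\ell=0$ gives the ordinary grim reaper over $\RR\times(-\pi/2,\pi/2)$ and $\ell\ne 0$ the versions tilted in the $\pm x$-direction. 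One checks directly that these satisfy~\eqref{eqn:translatorequationR3}; for the lemma only the existence of one such $f$ for each $\ell$ is needed, not that these exhaust all tilted grim reapers.

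First I would compute
\[
 \nabla f(x,y) = \Bigl(\ell,\ -\sqrt A\,\tan\!\bigl(y/\sqrt A\bigr)\Bigr),
\]
which does not depend on $x$. Given $v=(v_1,v_2)\in\RR^2$, set $\ell:=v_1$, so that $A = 1+v_1^2$ and $a=\tfrac{\pi}{2}\sqrt{1+v_1^2}\ge\pi/2$. As $y$ ranges over $(-a,a)$ the argument $y/\sqrt A$ sweeps $(-\pi/2,\pi/2)$, so $y\mapsto -\sqrt A\,\tan(y/\sqrt A)$ is a continuous (strictly decreasing) bijection of $(-a,a)$ onto $\RR$; let $y\in(-a,a)$ be the point with $-\sqrt A\,\tan(y/\sqrt A)=v_2$. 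Then $\nabla f(x,y) = (v_1,v_2) = v$ for every $x$, which is the first assertion.

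For the second assertion, note $f_y(x,y) = v_2 = -\sqrt A\,\tan(y/\sqrt A)$. If $|y|\le \pi/4$, then $|y/\sqrt A|\le \pi/(4\sqrt A) < \pi/2$ since $\sqrt A\ge 1$, so monotonicity of $\tan$ gives
\[
 |f_y(x,y)| = \sqrt A\,\bigl|\tan(y/\sqrt A)\bigr| \le \sqrt A\,\tan\!\Bigl(\tfrac{\pi}{4\sqrt A}\Bigr).
\]
Writing $t := 1/\sqrt A\in(0,1]$, it remains to verify $\tfrac1t\tan(\pi t/4)\le 1$, i.e.\ $\tan(\pi t/4)\le t$ on $(0,1]$. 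This I would prove by observing that $g(t):=t-\tan(\pi t/4)$ satisfies $g(0)=g(1)=0$ and $g''(t) = -\tfrac{\pi^2}{8}\,\sec^2(\pi t/4)\tan(\pi t/4) < 0$ on $(0,1)$, so $g$ is concave on $[0,1]$ and hence $g\ge 0$ there. This gives $|f_y(x,y)|\le 1$. The only genuinely nontrivial point is this last inequality $\tan(\pi t/4)\le t$; everything else is bookkeeping with the known formula, the one thing to keep in mind being that $\ell$ ranges over all of $\RR$ (tilt in either horizontal direction, via $x\mapsto -x$), so that any prescribed first component $v_1$ can be realized.
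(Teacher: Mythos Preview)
Your proof is correct and follows essentially the same route as the paper's: write down the explicit tilted grim reaper formula (your $\sqrt A$ is the paper's $c=2a/\pi$), read off $\nabla f$, and pick $\ell$ and $y$ to hit the prescribed $v$. The only difference is cosmetic, in how you handle the bound $|f_y|\le 1$: the paper fixes $y$ and observes that $c\mapsto |c\tan(y/c)|$ is decreasing on $[1,\infty)$, hence $|f_y|\le|\tan y|\le 1$ for $|y|\le\pi/4$, whereas you first replace $|y|$ by $\pi/4$ and then prove the resulting one-variable inequality $\tan(\pi t/4)\le t$ on $(0,1]$ by concavity; both arguments are straightforward and equivalent in spirit.
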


\begin{proof}
Up to an additive constant, a tilted grim reaper over the indicated strip is given by
\[
  f(x,y)= \kappa^2\log(\cos(y/\kappa)) \pm x\sqrt{\kappa^2 - 1},
\]
where $\kappa= 2a/\pi \ge 1$.
Thus $\frac{\partial f}{\partial x} = \pm \sqrt{\kappa^2-1}$, so we can choose $\kappa$ and the sign of $\pm$ 
so that $\frac{\partial f}{\partial x}\equiv v_1$.
Note that
\[
  \frac{\partial f}{\partial y}(x,y) = -\kappa \tan(y/\kappa),
\]
which ranges from $-\infty$ to $+\infty$, so (given $\kappa$)
we can choose $y$ so that $\frac{\partial f}{\partial y}(x,y)=v_2$.

Now, given $|y| \leq \pi/4$, we know that $\kappa \mapsto |\kappa\tan(y/\kappa)|$ is a decreasing 
 function of $\kappa\in [1,\infty)$, so
\begin{align*}
\left|\frac{\partial f}{\partial y}(x,y)\right|
&=
| \kappa\tan(y/\kappa)| 
\\
&\le
| \tan y|
\\
&\le 
1.
\end{align*}
\end{proof}
By definition, a yeti is (up to translation and rotation) a complete translator $M$ such that $Z\subset M$ and such that $M\setminus Z$ is the graph of a function over $\RR^2\setminus \{y=0\}$.
If $u$ is the restriction of that function to the halfplane $\{y>0\}$, then (by completeness) $u$ has boundary value $+\infty$ on one component of $\{y=0\}\setminus\{(0,0)\}$ and $-\infty$ on the other component.
The following theorem proves that such translators do not exist.
\begin{theorem}\label{yeti-theorem}
There is no solution 
\[
   u: \RR\times (0,\infty)\to \RR
\]
of the translator equation such that
\[
u(x,0)
=
\begin{cases}
\infty &\text{for $x<0$, and} \\
-\infty &\text{for $x>0$},
\end{cases}
\]
and such that $\graph(u)\cup Z$ is a 
  smooth manifold-with-boundary.
\end{theorem}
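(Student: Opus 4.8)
The plan is to argue by contradiction using a sliding--grim-reaper comparison in the spirit of Chini \cite{chini}. Suppose such a $u$ exists and set $M:=\graph(u)\cup Z$, a complete translator that is a smooth manifold-with-boundary with $\partial M=Z$ and $M\subset\{y\ge 0\}$. The goal is to force $M$ to coincide, on an open set, with a tilted grim reaper defined only over a \emph{strip} of width $\ge\pi$; since such a grim reaper blows up at the two edges of its strip while $u$ is smooth and finite at every interior point of $\{y>0\}$, the strong maximum principle for the translator equation together with unique continuation then yields a contradiction.

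Before that, I would record two qualitative features of $M$. First, its behavior along $Z$: the tangent plane of $M$ along $Z$ is vertical, so $\nu$ is horizontal there, $\nu(0,0,t)=(\cos\alpha(t),\sin\alpha(t),0)$. The level curves $\{u=t\}$ in the domain each have an end at the origin, and as $t$ runs over $\RR$ these ends sweep from the positive $x$-axis (where $u=-\infty$) to the negative $x$-axis (where $u=+\infty$); consequently $\nu(0,0,t)$ converges to $\ee_2$ as $t\to-\infty$ and to $-\ee_2$ as $t\to+\infty$ (or vice versa), and $\nu\cdot\ee_2$ changes sign along $Z$. In particular, on $\graph(u)$ the function $\nu\cdot\ee_2=-u_y/\sqrt{1+|Du|^2}$ is positive near the $\{x<0\}$ boundary ray and negative near the $\{x>0\}$ boundary ray, so the locus $\{u_y=0\}$ is nonempty. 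Second, its behavior as $x\to\pm\infty$: I would show $u(x,y)\to-\infty$ locally uniformly in $y>0$ as $x\to+\infty$, and $u(x,y)\to+\infty$ locally uniformly as $x\to-\infty$. If this failed along a sequence $(x_i,y_i)$ with $x_i\to+\infty$, then translating $M$ by $-x_i\ee_1$ and applying the compactness theorem for translators \cite{scherk-like}*{Theorem~10.1} with interior estimates would produce a limit that is a complete translating graph over a subdomain of the half-plane $\{y>0\}$ with boundary value $-\infty$ along the \emph{entire} line $\{y=0\}$ and containing no vertical line; by the classification of translating graphs (all of which are either entire or live over strips of width $\ge\pi$), no such surface exists.

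Now the comparison. Choose a point $p_0$ in the domain with $u_y(p_0)=0$ and with $y(p_0)$ large, and let $v:=\nabla u(p_0)$, so $v_2=0$. Lemma~\ref{reaper-lemma} provides a tilted grim reaper $f$ over $\RR\times(-a,a)$ with $a\ge\pi/2$, with $\nabla f\equiv v$ on the central line $\RR\times\{0\}$, and with $|f_y|\le 1$ there (the $|y|\le\pi/4$ clause), i.e.\ the grim reaper is mild near that line. Translating $f$ in the $(x,y)$-plane gives a tilted grim reaper $\Gamma$ over a strip $T=\RR\times(y(p_0)-a,\,y(p_0)+a)\subset\{y>0\}$ with $\nabla\Gamma\equiv v$ on $\RR\times\{y(p_0)\}$ and $\Gamma\to-\infty$ at the two edges of $T$, which are interior to $\{y>0\}$ where $u$ is finite. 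Consider the family $\Gamma+s$, $s\in\RR$. Using the behavior as $x\to\pm\infty$ (and choosing the orientation of the tilt of $\Gamma$ compatibly with it) together with $\Gamma\to-\infty$ at $\partial T$, one gets $\Gamma+s\le u$ on $\overline T$ for $s$ sufficiently negative, while $\Gamma+s\not\le u$ for $s$ sufficiently positive. Hence $s_0:=\sup\{\,s:\Gamma+s\le u\text{ on }T\,\}$ is finite, and $\Gamma+s_0$ touches $\graph(u)$ at a point $q$ that is forced to lie in the \emph{interior} of $T$ and in a bounded range of $x$ (not at the edges, where $\Gamma\to-\infty$, and not at $x=\pm\infty$, by the asymptotics). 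By the strong maximum principle, $\Gamma+s_0\equiv u$ on the component of the overlap of the two domains containing $q$, and then unique continuation for the translator equation gives $u\equiv\Gamma+s_0$ on all of $T$; this forces $u\to-\infty$ at $y=y(p_0)\pm a$, contradicting the smoothness and finiteness of $u$ at these interior points of $\{y>0\}$.

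The main obstacle is the asymptotic analysis: extracting the limit of $M$ translated far in the $x$-direction and, crucially, identifying it precisely enough to exclude it, together with the bookkeeping needed to make the choice of comparison grim reaper consistent (a point $p_0$ with $u_y(p_0)=0$ whose associated strip $T$ fits strictly inside $\{y>0\}$, and a tilt whose growth matches that of $u$ as $x\to+\infty$). Without this control the family $\Gamma+s$ could fail to realize its first contact at a finite interior point and the maximum principle could not be invoked. This is exactly where Chini's method enters: the downward grim-reaper wings pin the comparison down transversally, while the $x$-asymptotics pin it down longitudinally, confining the first contact to a compact region.
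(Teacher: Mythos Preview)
There is a genuine gap in your asymptotic step. You assert that translating $M$ far in the $+x$-direction and passing to a limit yields a complete translating graph over a subdomain of $\{y>0\}$ with boundary value $-\infty$ along all of $\{y=0\}$, and that ``no such surface exists.'' But such surfaces \emph{do} exist: any tilted grim reaper or $\Delta$-wing over $\RR\times(0,w)$ with $w\ge\pi$ is a complete translating graph with boundary value $-\infty$ on both edges, in particular on $\{y=0\}$. So this cannot be excluded, and you cannot deduce $u(x,y)\to-\infty$ as $x\to+\infty$. More fundamentally, without a priori gradient bounds you have no control over the domain of the limit graph; it could be a graph only over a strip strictly inside $\{y>0\}$. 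This is precisely the obstacle the paper identifies and then works around: it first shows (via a rotated grim-reaper comparison) that for every $x>0$ there is some $y$ in a fixed interval $(0,c)$ with $u_y(x,y)<0$; then, by taking blow-down limits along the locus $\{u_y=0\}$, it shows that for large $x$ this locus is the graph of a smooth function $Y:[x_1,\infty)\to(\pi/4,c)$ on which $|Du|$ stays bounded; and finally it invokes a gradient-propagation lemma to obtain $\sup_{[x_1,\infty)\times K}|Du|<\infty$ for every compact $K\subset(0,\infty)$. Only with these bounds does the translated limit exist as a graph over the \emph{full} half-plane, which is what actually contradicts the classification.

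Your sliding step inherits the same difficulty. Even granting the qualitative limits $u\to\mp\infty$ as $x\to\pm\infty$, to ensure $\inf_T(u-\Gamma)>-\infty$ (so that the first-contact height $s_0$ is finite) you need a \emph{linear} comparison of the form $u(x,y)\ge v_1 x - C$ on $T$ as $x\to+\infty$, where $v_1=u_x(p_0)$ is the tilt of $\Gamma$; a mere $u\to-\infty$ does not suffice. Nor can you ``choose the orientation of the tilt of $\Gamma$ compatibly'': by your own construction the tilt equals $u_x(p_0)$, whose sign and magnitude you do not control. Without such quantitative information the putative first-contact point can escape to $|x|=\infty$, and the strong maximum principle is never engaged.
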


\begin{remark}\label{yeti-remark}
The hypothesis that $\graph(u)\cup Z$
is a smooth manifold-with-boundary is not necessary.
See Proposition~\ref{regularity}.
\end{remark}

The idea of the proof is as  follows.
We suppose that there is such a 
function $u$, and we take a subsequential limit of
\[
   u(x+s,y) - u(s,1)
\]
as $s\to\infty$.  (Here $1$ is arbitrary; we could use
any positive number.)
We show that the subsequential limit is a complete,
translating graph over all of $\RR\times(0,\infty)$,
which is impossible since, by
the classification~\cite{graphs-himw} of complete graphical translators, no such translator exists.

A priori, the subsequential limit might not exist, or it might
be a graph over a proper subset of
  $\RR\times(0,\infty)$.
To exclude those possibilities, we need suitable gradient 
bounds on $u$.  
By \cite{graphs-himw}*{Theorem 6.7}, to get such bounds, it suffices to show that $|Du|$ is bounded
on some connected, unbounded set contained in 
a halfstrip of the form $[x_1,\infty)\times I$, where $I$
is compact interval in $(0,\infty)$.
Fortunately, we are able to produce 
the necessary set,
namely the graph of a certain smooth
  function $Y:[x_1,\infty) \to I$ such that, 
  on that graph,  $\frac{\partial u}{\partial y}=0$
  and $|\frac{\partial u}{\partial x}|$ is bounded.

\begin{proof}[Proof of Theorem~\ref{yeti-theorem}]
Suppose, contrary to the theorem, that there is such a function $u$.

\setcounter{claim}{0}
\begin{claim}\label{reaper-claim}
Let $a\ge \pi/2$, and let
\[
  f: \RR\times (-a,a)\to \RR
\]
be a function whose graph is a complete translator.
(Thus $f=-\infty$ on the boundary.)
Then the infimum of $u-f$ on the half-strip
\[
     S:=(-\infty,-1]\times (0,a)
\]
is attained at a point in the right edge $\{-1\}\times (0,a)$ of $S$.
\end{claim}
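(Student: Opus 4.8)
The plan is to compare $u$ with the complete translator $f$ on the half-strip $S=(-\infty,-1]\times(0,a)$ and locate where $u-f$ achieves its infimum. First I would observe that $u-f$ is the difference of two solutions of the translator equation, hence satisfies the strong maximum principle (it is a Morse--Rad\'o function, cf.~\cite{morse-rado}), so on any bounded subregion of $S$ the infimum of $u-f$ is attained only on the boundary of that subregion. Thus the only way the infimum on $S$ could fail to be attained on the right edge $\{-1\}\times(0,a)$ is if it is attained ``at infinity,'' i.e., along a sequence of points $(x_j,y_j)\in S$ with $x_j\to-\infty$; note it cannot be approached along the horizontal edges $y=0$ or $y=a$ of $S$, because there $f=-\infty$ while $u$ is finite (on $\{y=a\}$) or $u$ tends to $+\infty$ (on the part of $\{y=0\}$ with $x<0$, which is exactly the portion relevant to $S$), so $u-f\to+\infty$ there.

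Next I would rule out the possibility that the infimum is approached as $x\to-\infty$. Here I expect to invoke the asymptotic behavior of $u$ as $x\to-\infty$. The natural statement (an analogue of Fact~(F3) in the pitchfork proof) is that the translates $u(\bar x+\cdot,\cdot)-u(\bar x,y_0)$ converge, as $\bar x\to-\infty$, to a tilted grim reaper over a strip $\RR\times(0,\ell)$ for some $\ell\in(0,\infty]$; in particular $u_y(x,y)$ converges as $x\to-\infty$, locally uniformly in $y$, to the $y$-derivative of that limiting grim reaper, which is a strictly monotone function of $y$. Combined with the corresponding (exact) monotonicity of $f_y$, this forces $(u-f)_y$ to have a definite sign on $(-\infty,\tilde x]\times(y',y'')$ for suitable constants, which prevents $u-f$ from having arbitrarily small values along a sequence going to $x=-\infty$ without violating the boundary behavior just described; more concretely, if $(u-f)(x_j,y_j)\to\inf_S(u-f)$ with $x_j\to-\infty$, the limiting grim reaper comparison shows the limit of $u-f$ along such a sequence is bounded below by its values on the right edge, a contradiction with strictness. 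Alternatively, one translates the whole configuration far to the left and applies the strong maximum principle / Hopf lemma to the limiting pair of translators.

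Carrying this out: suppose the infimum $m:=\inf_S(u-f)$ is \emph{not} attained on $\{-1\}\times(0,a)$. Pick points $p_j=(x_j,y_j)\in S$ with $(u-f)(p_j)\to m$. By the maximum principle applied on expanding bounded pieces of $S$, the $p_j$ cannot stay in a bounded region, so $x_j\to-\infty$; passing to a subsequence, $y_j\to y_\infty\in[0,a]$. The boundary analysis excludes $y_\infty\in\{0,a\}$ (there $u-f\to+\infty$ as noted), so $y_\infty\in(0,a)$. Now use the grim-reaper asymptotics of $u$ and the explicit form of $f$ to compute $\lim_j (u-f)(p_j)$ in terms of the two grim reapers; a translation argument then produces a limit function on which the strong maximum principle (or Hopf boundary point lemma at the right edge of the limiting half-strip) is violated, since the limit would attain an interior minimum. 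This contradiction shows $m$ is attained on the right edge $\{-1\}\times(0,a)$, proving the claim.

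The main obstacle is pinning down precisely the asymptotic behavior of $u$ as $x\to-\infty$ and making the limiting comparison rigorous: one must argue that the relevant translates subconverge (using the gradient bounds available on the horizontal boundary and interior estimates) and identify the limit as a tilted grim reaper, then verify that the comparison with $f$ genuinely forces a contradiction rather than merely equality. In particular one has to handle the case where the limiting strip-width $\ell$ for $u$ differs from $a$, and the case $\ell=\infty$ (which would itself contradict the classification of complete graphical translators and so cannot occur, short-circuiting the issue). This is the step where the real work lies; the maximum-principle bookkeeping on the bounded part of $S$ and the boundary-value blow-up are routine.
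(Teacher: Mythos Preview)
Your approach has a genuine gap: it presupposes asymptotic information about $u$ as $x\to-\infty$ that is simply not available at this stage of the argument, and that cannot be obtained without circularity. You invoke ``the gradient bounds available on the horizontal boundary and interior estimates'' to get subconvergence of the translates $u(\bar x+\cdot,\cdot)-u(\bar x,y_0)$, but no such bounds have been established for the hypothetical yeti $u$; we know only its boundary values. Indeed, Claim~\ref{reaper-claim} is the \emph{first} step toward producing the curve (Claim~3) along which $|Du|$ is bounded, which then feeds into Lemma~\ref{slope-bound-lemma} to get gradient bounds. Using those bounds to prove Claim~\ref{reaper-claim} would be circular. Likewise, your remark that $\ell=\infty$ ``would itself contradict the classification of complete graphical translators'' is exactly the final contradiction the whole section is aiming for, and assumes the subconvergence you have not justified.

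The paper avoids all of this with a device you did not hit upon: rotate the half-strip $S$ by a small angle $\theta$ about the point $(-1,0)$, and intersect with $\{y>0\}$. This turns $S$ into a \emph{bounded} triangle $S(\theta)$. On $S(\theta)$ one compares $u$ with the rotated translator $f_\theta(p)=f(\Rr_{-\theta}p)$; the boundary values of $u-f_\theta$ are $+\infty$ on the two sides of the triangle other than the rotated right edge $E(\theta)$ (one side lies on $\{y=0,\,x<0\}$ where $u=+\infty$, the other on the rotated line $\{y=a\}$ where $f_\theta=-\infty$). The ordinary maximum principle on a bounded domain then gives $\min_{S(\theta)}(u-f_\theta)=\min_{E(\theta)}(u-f_\theta)$, and letting $\theta\to 0$ yields the claim. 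No asymptotic information about $u$ is needed at all.
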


(We only need the claim when $f$ is a tilted grim reaper.)

\begin{proof}
Let $\Rr_\theta:\RR^2\to\RR^2$ be counterclockwise rotation through $\theta$ about the point $(-1,0)$.
For $\theta\in (0,\pi/2)$, let
\[
   S(\theta) := (\Rr_\theta S) \cap \{y>0\}.
\]
Thus $S(\theta)$ is a triangular region with $(-1,0)$ as one vertex.
Let $E(\theta)$ be  the interior of the edge joining $(-1,0)$ to the vertex in $\{y>0\}$,
and let $E(0)$ be its limit as $\theta$ tends $0$:
\[
  E(0) = \{-1\}\times (0,a).
\]
Define
\[
   f_{\theta}: S(\theta) \to \RR
\]
by 
\[
  f_{\theta}(p) = f(\Rr_{-\theta}(p)).
\]

Note that
\[
  u-f_{\theta}: S(\theta) \to \RR
 \]
 has boundary value $+\infty$ at all boundary points except those in the edge $E(\theta)$.
 By the maximum principle, the minimum of $u-f_{\theta}$ is attained at a point
 in $E(\theta)$:
 \begin{equation}\label{rotated}
     \min_{S(\theta)}(u-f_{\theta})  = \min_{E(\theta)}(u-f_{\theta}).
 \end{equation}
The result follows by letting $\theta\to 0$. 
(Note that the right-hand side of~\eqref{rotated} depends
continuously on $\theta$.)
\end{proof}

\begin{claim}\label{c-claim}
There is a $c\in (0,\infty)$ with the following property.
For every $x>0$, there is a $y\in (0,c)$ such that $\frac{\partial u}{\partial y}(x,y)<0$.
\end{claim}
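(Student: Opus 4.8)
The plan is to argue by contradiction: suppose that for every $c \in (0,\infty)$ there is some $x_c > 0$ such that $u_y(x_c, y) \ge 0$ for all $y \in (0,c)$. Equivalently, there is a sequence $c_k \to \infty$ and points $x_k > 0$ with $u_y(x_k, y) \ge 0$ for all $y \in (0, c_k)$. The idea is to compare $u$ near the vertical segment $\{x_k\} \times (0, c_k)$ with a carefully placed tilted grim reaper, and derive a contradiction with the boundary value $u(x,0) = -\infty$ for $x > 0$. The grim reaper comparison is exactly the technique flagged in the introduction (attributed to Chini), and Lemma~\ref{reaper-lemma} together with Claim~\ref{reaper-claim} are the tools provided for it.

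The key steps, in order, are as follows. First, I would translate the picture: since $u(x,0) = -\infty$ for $x > 0$, the graph of $u$ plunges downward near the positive $x$-axis, which forces $u_y$ to be \emph{positive} close to that axis (moving up off the boundary, $u$ increases from $-\infty$). So the claim is really about what happens a bounded distance above the axis. The contradiction hypothesis gives, for arbitrarily large $c$, a vertical ray at some $x_c$ on which $u_y \ge 0$ all the way up to height $c$. Second, using Claim~\ref{reaper-claim} applied to a tilted grim reaper $f$ on a strip $\RR \times (-a, a)$ (with $a$ as large as we need), one gets that on the half-strip $S = (-\infty, -1] \times (0,a)$ the infimum of $u - f$ is attained on the right edge $\{-1\} \times (0,a)$; after a horizontal translation this can be positioned so the relevant vertical segment is $\{x_k\} \times (0, c_k)$. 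Third — and this is where the sign condition $u_y \ge 0$ enters — one wants to slide the grim reaper (choosing its tilt via the free parameter in Lemma~\ref{reaper-lemma}, which controls $f_x$ and the location $y$ where $\nabla f = v$) so that at some point of that segment the graph of $u$ touches the grim reaper from above with matching gradient, or so that the barrier forces $u$ to stay above a grim reaper that itself goes to $-\infty$ at a finite height $y = a$; pushing $a$ down toward the segment contradicts $u_y \ge 0$ there. The upshot should be that $u_y \ge 0$ on an unboundedly tall segment is incompatible with the grim reaper barriers, so some $y \in (0,c)$ with $u_y < 0$ must exist, and a compactness/continuity argument upgrades this to a uniform $c$ working for all $x > 0$.

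The main obstacle I expect is the third step: correctly choosing the grim reaper's tilt and vertical placement so that the comparison genuinely bites. The grim reaper in Lemma~\ref{reaper-lemma} is defined on a symmetric strip $(-a,a)$ and blows down to $-\infty$ at both edges, so one must position it so that $u$ lies above it on the relevant region (using Claim~\ref{reaper-claim} to control the infimum on the left half-strip) while arranging that the geometry near $\{x_k\} \times (0,c_k)$ forces a contradiction with $u_y \ge 0$ — this is delicate because it requires simultaneously controlling the comparison on the unbounded left side and extracting pointwise gradient information on the right. A secondary subtlety is the final passage from ``for each $x$ some $y$ works, with a bound depending on $x$'' to ``a single $c$ works for all $x$'': this should follow from the grim reaper barriers being translation-covariant in $x$ (so the bound $c$ produced is uniform in $x_k$), but it must be checked that the estimate in the grim reaper argument does not degenerate as $x \to \infty$ or $x \to 0^+$.
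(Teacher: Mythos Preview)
Your proposal has a genuine gap: the ``third step'' is not an argument but a wish, and the overall strategy is pointed in the wrong direction. You set things up as a contradiction based on a vertical segment $\{x_c\}\times(0,c)$ on the \emph{right} side (where $u(x,0)=-\infty$, so $u_y>0$ near $y=0$ automatically), and then hope to slide a grim reaper until something breaks. But nothing in Claim~\ref{reaper-claim} gives pointwise gradient information at $x_c$; it only controls an infimum on a left half-strip, and your ``touching with matching gradient / pushing $a$ down'' sketch never produces a concrete inequality that conflicts with $u_y(x_c,\cdot)\ge 0$.

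The paper's proof is not a contradiction argument and does not slide barriers. It is a direct Morse--Rad\'o level-set argument, and the two crucial ideas you are missing are these. First, the grim reaper $g$ is chosen to be \emph{tangent} to $u$ at a carefully selected point $p$ on the \emph{left} side: one picks $p$ with $x(p)=-1$, $0<y(p)<\pi/4$, and $u_y(p)<-1$ (possible since $u(-1,0)=+\infty$), and lets $g$ be the tilted grim reaper on $\RR\times(b,c)$ with $g(p)=u(p)$ and $Dg(p)=Du(p)$; Lemma~\ref{reaper-lemma} then guarantees $g_y<0$ on $\RR\times[0,c)$. This $c$ \emph{is} the constant in the claim. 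Second, because $Dh(p)=0$ where $h:=u-g$, the nodal component $C$ of $\{h=0\}$ through $p$ is a tree with at least four ends. Claim~\ref{reaper-claim} is then applied not to $g$ but to a \emph{second} grim reaper $f$ on a wider strip with $f_x<0$, solely to force $u-g\to+\infty$ as $x\to-\infty$ on $(-\infty,-1]\times(0,c)$; this rules out ends of $C$ at $-\widehat\infty$. Since at most one end can go to $(0,0)$, at least three ends go to $+\widehat\infty$, so for every $\hat x>0$ the vertical segment $\{\hat x\}\times(0,c)$ meets $C$ in at least two points $y_1<y_2$. At those points $u=g$, and since $g_y<0$ one gets $u(\hat x,y_2)<u(\hat x,y_1)$, hence $u_y(\hat x,y)<0$ for some $y\in(y_1,y_2)$.
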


\begin{proof}
Since $u(-1,0)=\infty$, 
\[
  \liminf_{y\to 0} \frac{\partial u}{\partial y}(-1,y) = -\infty.
\]
Thus we can choose a point $p$ with $x(p)=-1$ such that 
\begin{equation}\label{conditions}
\begin{gathered}
\text{$0<y(p)<\pi/4$, and} \\ 
\frac{\partial u}{\partial y}(p) < -1.
\end{gathered}
\end{equation}

Let 
\[
  g: \RR\times (b,c)\to \RR
\]
be the tilted grim reaper surface defined over a strip parallel to the $x$-axis such that
 $g(p)=u(p)$ and $Dg(p)=Du(p)$.
(The function $g$ exists by Lemma~\ref{reaper-lemma}.)
By~\eqref{conditions} and Lemma~\ref{reaper-lemma}, the distance from $p$ to the center line $\{y=(b+c)/2\}$ 
of the strip is greater than $\pi/4$, and thus
\[
 \frac{b+c}2 < y(p) -\frac{\pi}4 < 0.
\]
Consequently,
\begin{equation}\label{down}
\text{$\frac{\partial g}{\partial y}<0$ on $\RR\times [0,c)$.}
\end{equation}
Choose $a$ large enough that $[b,c]\subset (-a,a)$, and let
\[
   f: \RR\times (-a,a) \to \RR
\]
be a tilted grim reaper function such that $\frac{\partial f}{\partial x} < 0$.
By Claim~\ref{reaper-claim}, and by adding a constant to $f$, we can assume that
\[
    \text{$u-f\ge 0$ on $(-\infty,-1]\times (0,c)$}.
\]
Since $[b,c]\subset (-a,a)$, we see that $\frac{\partial f}{\partial x}- \frac{\partial g}{\partial x}$ is a constant $-k<0$.
Thus 
\begin{align*}
f(x,y) - g(x,y) 
&= -kx  + ( f(0,y) - g(0,y))
\\
&\ge -kx + f(0,c) - g(0,0) 
\end{align*} 
since $f(0,y)$ and $g(0,y)$ are decreasing functions of $y\in [0,c]$.
Thus, on $(-\infty,-1]\times[0,c)$,
\begin{align*}
u(x,y) - g(x,y)
&\ge
f(x,y) - g(x,y)  \\
&\ge 
\delta - kx
 \end{align*}
for a constant $\delta$.
Hence
\begin{equation}\label{grows}
  \lim_{x\to -\infty} \min_{y\in [0,c)} (u(x,y) -g(x,y)) = \infty.
\end{equation}

Let $h=u-g$. Thus the domain of $h$ is the strip
\[
  \RR\times (0,c),
\]
and $h$ has boundary values
\[
h(x,0)
=
\begin{cases}
\infty &\text{if $x<0$,} \\
-\infty &\text{if $x>0$,}
\end{cases}
\]
and
\[
h(x,c) = \infty  \quad(x\in \RR).
\]
Let $C$ be the connected component of $h^{-1}(0)$ that
contains $p$.

Since $Dh(p)=0$, $C$ is a tree with at least $4$ ends.
From the boundary values of $h$, we see that each end of $C$
tends to $-\widehat\infty$, to $\widehat\infty$, or to $(0,0)$. 
  Here $\pm \widehat\infty$ are the points at 
  $x=\pm \infty$ 
  given by~\eqref{infinity}.

By~\eqref{grows}, no end tends to $-\widehat\infty$. 

As in the proof of Theorem~\ref{pitchfork-theorem},
 at most one end tends to $(0,0)$.

Thus, 
at least three ends of $C$ tend to $+\Infty$.
Consequently, for each $x>0$, there are
  at least $3$ values of $y\in [0,c)$
for which $h( x,y)=0$.
Let $0<y_1<y_2<c$ be two of those values.
Then
\[
   u( x, y_1) - g( x, y_1) = u( x, y_2) - g( x, y_2)=0.
\]
so
\[
  u( x, y_2) =  g( x, y_2)< g(x,y_1)=u(x, y_1),
\]
since $\frac{\partial g}{\partial y}(x,y)<0$ for $y\in [0,c)$.
Thus there is a $y\in (y_1,y_2)\subset (0,c)$ such that $\frac{\partial u}{\partial y}(x,y)<0$.
\end{proof}

\begin{claim}
Let $c$ be as in Claim~\ref{c-claim}.
Let $Q$ be the set of $(x,y)\in \RR\times(0,c)$ such that $\frac{\partial u}{\partial y}(x,y)=0$.
There is an $x_1\in (0,\infty)$ such that $Q\cap\{x\ge x_1\}$
is the graph of a smooth function 
$
   Y: [x_1,\infty)\to (\pi/4, c)
$.
Furthermore, 
\[
   \sup_{Q\cap \{x \ge x_1\}} |Du|< \infty.
\]
\end{claim}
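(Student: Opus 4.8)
The plan is to analyse the zero set $Q$ of $v:=u_y$ by means of the linear elliptic equation it satisfies, to confine $Q$ for large $x$ by comparison with tilted grim reapers (in the spirit of Claims~\ref{reaper-claim} and~\ref{c-claim}), and then to extract the rigid ``graph'' conclusion by a compactness argument. As a first step, observe that the translator operator depends only on $Du$ and $D^2u$, not on $u$, $x$, or $y$; differentiating the translator equation in the $y$-direction therefore shows that $v=u_y$ solves a linear, uniformly elliptic, second-order equation with \emph{no} zeroth-order term. Hence $v$ obeys the strong maximum principle with no sign restriction, and (by the nodal-set analysis underlying Morse--Rad\'o theory, cf.~\cite{morse-rado}) $Q=v^{-1}(0)$ is a locally finite embedded network in $\RR\times(0,c)$: off a discrete set of ``nodes'' (points where also $Dv=0$) it is a union of smooth arcs, each node has even valence $\ge 4$, and $Q$ contains no closed loop, since such a loop would bound a region on which $v$ vanishes on the boundary and hence (by the maximum principle and unique continuation) everywhere, contradicting the fact that $v$ is positive somewhere and negative somewhere. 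Thus $Q$ is a forest whose ends can only approach $\{y=0\}$, $\{y=c\}$, $+\Infty$, or $-\Infty$.

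Next I would confine $Q$ for large $x$ and bound $|Du|$ on it. Using the behaviour of $u$ near its $-\infty$ boundary ray $\{(x,0):x>0\}$ (where $u_y>0$), together with Claim~\ref{c-claim} (which, for each $x>0$, produces a point of $\{x\}\times(0,c)$ with $u_y<0$), one sees that $Q$ meets every vertical segment $\{x\}\times(0,c)$ with $x>0$. Comparing $u$ with suitably tilted grim reapers over strips $\RR\times(-a,a)$ -- using Lemma~\ref{reaper-lemma} and the maximum-principle arguments of Claims~\ref{reaper-claim} and~\ref{c-claim}, with the reaper $g$ of Claim~\ref{c-claim} (for which $g_y<0$ throughout $\RR\times[0,c)$) as one barrier and a tilted reaper with centre line above $c$ as the other -- I would trap $u$, over a halfstrip $[x_1,\infty)\times[y_-,y_+]$ with $\pi/4\le y_-<y_+<c$, between two translators of \emph{bounded gradient}, forcing $u_y>0$ on $[x_1,\infty)\times(0,y_-]$ and $u_y<0$ on $[x_1,\infty)\times[y_+,c)$. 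Then $Q\cap\{x\ge x_1\}$ lies in the compactly contained band $[x_1,\infty)\times(y_-,y_+)$; its ends over $\{x\ge x_1\}$ can run only to $+\Infty$ or to the segment $\{x_1\}\times(y_-,y_+)$ (no end escapes to $\{y=0\}$, $\{y=c\}$, $-\Infty$, or, by \eqref{grows} and the argument near the origin in the proof of Theorem~\ref{pitchfork-theorem}, to $(0,0)$); and $Q$ meets each slice $\{x\}\times(0,c)$, $x\ge x_1$, where $u_y$ changes from positive to negative. Moreover, the sandwiching together with interior gradient (and higher-order) estimates for the translator equation on the band yields $\sup_{Q\cap\{x\ge x_1\}}|Du|<\infty$; note that $|Du|=|u_x|$ on $Q$.

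It remains to upgrade this to the statement that $Q\cap\{x\ge x_1\}$ is a \emph{single} arc, graphical over $x$, with $u_{yy}<0$ along it, so that the implicit function theorem produces the desired smooth $Y:[x_1,\infty)\to(\pi/4,c)$. I would argue by contradiction: if $Q$ were not eventually a single graph over $x$, there would be a sequence $x_j\to\infty$ at which $Q$ exhibits a defect -- a second branch over $x_j$, a node of $v$, or a vertical tangency. Using the gradient bound from the previous step and interior estimates, the translates $u(\cdot+x_j,\cdot)-u(x_j,\cdot)$ subconverge in $C^\infty_{\mathrm{loc}}$ over a horizontal strip $\RR\times(y_-,y_+)$ to a translator $u_\infty$ whose derivative $v_\infty=\partial_yu_\infty$ still carries the defect while remaining positive near $y_-$ and negative near $y_+$. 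A defect persisting along the whole strip forces $v_\infty\equiv 0$, i.e.\ $u_\infty$ to be $y$-independent, which is impossible over all of $\RR$ in $x$: a $y$-independent translator is a grim reaper, defined only on a strip of width $\pi$ in $x$. The remaining, localized case -- $v_\infty$ with an isolated degenerate zero -- is excluded by a finer analysis of $v_\infty$ near that point, using the confinement and the structure of translators over strips. This contradiction completes the proof.

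I expect Steps 2--3 taken together to be the main obstacle: converting the soft structure of $Q$ (a loopless network, confined to a band, meeting every slice) into the rigid statement that it is eventually a single graph over $x$ with bounded gradient. Two points demand care: (i) arranging the grim-reaper barriers so that the comparison principle genuinely applies over a \emph{right} halfstrip in spite of $u=-\infty$ on part of the bottom edge -- which is why one compares on a band bounded away from $\{y=0\}$ rather than on $[x_1,\infty)\times(0,a)$ -- and (ii) excluding oscillation of $Q$ (extra branches or vertical tangencies) for large $x$; the latter seems to require the compactness argument above and is somewhat entangled with the gradient estimate itself, so the isolated-defect case is the technically delicate point.
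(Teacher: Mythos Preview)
Your plan has a real gap in Step~2, and the route is more circuitous than necessary. The barrier arguments of Claims~\ref{reaper-claim} and~\ref{c-claim} operate on a \emph{left} halfstrip (where $u=+\infty$ on the bottom edge), and you give no mechanism to run a comparison on a \emph{right} halfstrip; you flag this yourself, and it is a genuine obstruction, not a detail. Even granting a $C^0$ sandwich for $u$, that would not yield the sign assertions ``$u_y>0$ on $[x_1,\infty)\times(0,y_-]$'' and ``$u_y<0$ on $[x_1,\infty)\times[y_+,c)$'': trapping a function between two others controls its oscillation, not the sign of a partial derivative. Nor do barriers explain why the band satisfies $y_-\ge\pi/4$. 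Your Step~3 then relies on the gradient bound from Step~2 to run compactness at the level of \emph{functions} over a strip, and the ``isolated degenerate zero'' case is left open.

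The paper sidesteps all of this by running compactness at the level of \emph{surfaces}, before any gradient bound is available. For any sequence $p_n\in Q$ with $x(p_n)\to\infty$, the translated surfaces $\graph(u)-(x(p_n),0,u(p_n))$ subconverge to a complete translator in $\{y\ge 0\}$ tangent to $\ee_2$ at the limit point $(0,\hat y,0)$; this rules out a vertical plane, so by the classification in~\cite{graphs-himw} the limit is the graph of some $\phi$ over a strip of width $\ge\pi$ contained in $\{y\ge 0\}$, with $\phi_{yy}<0$ everywhere and $\phi_y=0$ only on the midline. Reading off the limit gives, for all $p\in Q$ with $x(p)$ large, the three conclusions simultaneously: $y(p)>\pi/4$, $u_{yy}(p)<0$, and $|Du(p)|$ bounded. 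Uniqueness of the zero on each slice $\{x\}\times(0,c)$ is then one line of calculus: two zeros of $y\mapsto u_y(x,y)$, each with $u_{yy}<0$, would force an interior minimum of $u(x,\cdot)$ between them, hence a third zero with $u_{yy}\ge 0$. No nodal-set or Morse--Rad\'o machinery is needed here.
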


\begin{proof}
Suppose $p_n\in Q$ and $x(p_n)\to\infty$.
After passing to subsequence, $y(p_n)$ converges to a limit $\hat y\in [0,c]$, 
and
\[
  \graph(u) - (x(p_n),0,u(p_n))
\]
converges to a complete translator that lies in the halfspace $\{y\ge 0\}$.
Let $\widehat M$ be the connected component of the limit  that contains $p:=(0,\hat y,0)$.

Note that the $y$-axis is tangent to $\widehat M$ at $p$.  
Thus, $\widehat M$ cannot be a vertical plane (since $\widehat M$ lies in $\{y\ge 0\}$).
Hence, $\widehat M$ is a graph. 
By the classification of graphical 
 translators~\cite{graphs-himw}, it is the graph of a function $\phi$ over a strip in $\{y\ge 0\}$
 of width $\ge \pi$.
Also, by that classification, 
  $\frac{\partial^2 \phi}{\partial y^2}<0$ at all points, and
$\frac{\partial \phi}{\partial y}=0$ only on the midline of the strip.
Since $\frac{\partial \phi}{\partial y}(0,\hat y)=0$, we see that
 the line $\{y=\hat y\}$ is the midline of the strip.
Since the strip has width $\ge \pi$, 
\[
  \hat y\ge \pi/2.
\]

Thus, we have shown that every sequence $p_n\in Q$ with $x(p_n)\to\infty$ has a subsequence $p_{n(i)}$
such that
\begin{enumerate}
\item $y(p_{n(i)})$ converges to a limit $\hat y\ge \pi/2$.
\item $\frac{\partial^2 u}{\partial y^2}(p_{n(i)})$ converges to a limit
 $\frac{\partial^2 \phi}{\partial y^2}(0,\hat y) <0$.
\item $Du(p_{n(i)})$ converges to a limit $D\phi(0,\hat y)$.
\end{enumerate}
Hence, we can choose $x_1>0$ large enough so that,
for all $p\in Q$ with $x(p)\ge x_1$,
\begin{gather}
  y(p) >\frac{\pi}4, \label{pi/4}   \\
   \frac{\partial^2 u}{\partial y^2}(p) < 0,  \label{bends-down}
\end{gather}
and so that 
\begin{equation}\label{slopey}
   \sup_{Q\cap \{x\ge x_1\}} |Du| < \infty.
\end{equation}

Let $x\ge x_1$.
We claim there is exactly one $y\in (0,c)$ for which $\frac{\partial u}{\partial y}(x,y)=0$, i.e.,
for which $(x,y)\in Q$.

 Now $c$ was chosen according to 
 Claim~\ref{c-claim}.
 Thus there is a $\tilde y\in (0,c)$ for which $\frac{\partial u}{\partial y}(x,\tilde y)<0$.
Since $u(x,0)= -\infty$, there is
at least one value of $y\in (0,\tilde y)$ for which $\frac{\partial u}{\partial y}(x,y)=0$.

If there were two such values $y_1<y_2$,
 then by~\eqref{bends-down}, 
the function 
\[y\in (y_1,y_2)\mapsto u(x,y)
\]
would attain its minimum at some $\hat y\in (y_1,y_2)$.
Thus $\frac{\partial u}{\partial y}(x,\hat y)=0$ and $\frac{\partial^2 u}{\partial y^2}(x, \hat y)\ge 0$, contrary to~\eqref{bends-down}.

Hence there is exactly one $y\in (0,c)$ -- call it $Y(x)$ -- such that $(x,y)\in Q$.
Note that $Y(x)\in (\pi/4,c)$ (by~\eqref{pi/4}),
and 
that $|Du|$ is bounded on the graph of $Y$ 
(by~\eqref{slopey}).
Finally, $Y(x)$ is a smooth function of $x\in [x_1,\infty)$, by~\eqref{bends-down} and the implicit function theorem.
\end{proof}

We are now ready to complete the proof of Theorem~\ref{yeti-theorem}.
We use the following special case of Lemma~6.3 in~\cite{graphs-himw}:

\begin{lemma}\label{slope-bound-lemma}
Suppose $u: [x_1,\infty)\times (0, A)\to \RR$ is a translator.
Suppose $I$ is a compact interval in $(0,A)$ and that
 $Y:[x_1,\infty)\to I$ is continuous function such that $|Du|$ is bounded on the graph of $Y(\cdot)$.
Then for every compact subset $K$ of $(0,A)$,
\begin{equation}\label{slope-bound}
  \sup_{[x_1,\infty)\times K} |Du| < \infty.
\end{equation}
\end{lemma}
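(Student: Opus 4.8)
The plan is to derive Lemma~\ref{slope-bound-lemma} directly from \cite{graphs-himw}*{Lemma~6.3}, by recognizing it as the special case in which the connected set on which the gradient is a priori controlled is the graph of $Y$. Concretely: since $K$ is a compact subset of $(0,A)$, I would fix a compact interval $I'$ with $I\cup K\subset I'\subset(0,A)$; by hypothesis there is a constant $C_0<\infty$ with $|Du|\le C_0$ on $\graph(Y)$, and $\graph(Y)$ is a connected, unbounded subset of the half-strip $[x_1,\infty)\times I'$. This is exactly the configuration handled by \cite{graphs-himw}*{Lemma~6.3}, which then gives $\sup_{[x_1,\infty)\times I'}|Du|<\infty$; since $K\subset I'$, inequality~\eqref{slope-bound} follows at once. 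So the substance of the lemma is already contained in \cite{graphs-himw}.

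For the reader's convenience I would also recall the mechanism of that argument. Suppose the conclusion failed and pick $p_n=(x_n,y_n)$ with $y_n\in K$ and $|Du(p_n)|\to\infty$. On any compact rectangle $[x_1,R]\times K$ the translator $u$ is smooth, so $|Du|$ is bounded there, forcing $x_n\to\infty$. Put $q_n=(x_n,Y(x_n))\in\graph(Y)$, pass to a subsequence so that $Y(x_n)\to y_\ast\in I$, and consider the translators $u_n(x,y):=u(x+x_n,y)-u(q_n)$, defined on half-strips $[x_1-x_n,\infty)\times(0,A)$ that exhaust $\RR\times(0,A)$. Each $\graph(u_n)$ passes through the origin, where the translator metric $e^{-z}\delta_{ij}$ is uniformly comparable to the Euclidean metric and has bounded geometry, with tangent plane of slope $|Du(q_n)|\le C_0$ there; since minimal graphs are stable minimal surfaces for the translator metric, curvature estimates for stable minimal surfaces yield, along a further subsequence, smooth local convergence near the origin to a translator graph. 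Propagating that bound along the strip, one concludes the limit is a translator graph over all of $\RR\times(0,A)$, whence $|Du_n|$ is uniformly bounded on $[x_1-x_n,\infty)\times K$ for $n$ large, contradicting $|Du(p_n)|\to\infty$.

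The step that is genuinely delicate --- and the reason one quotes \cite{graphs-himw}*{Lemma~6.3} rather than arguing in a line --- is this last propagation. Away from the base point the $z$-coordinate of $\graph(u_n)$ is uncontrolled, so the translator metric no longer has uniformly bounded geometry and the curvature estimate does not apply directly; one must instead exploit the structure of translators over strips (monotonicity of the vertical cross-sections $y\mapsto u(x,y)$, control of the Gauss map on a strip, and ultimately the classification of complete graphical translators in \cite{graphs-himw}) to prevent a vertical tangent plane from forming, or the graph property from being lost, before reaching the edges $y=0$ and $y=A$ of the strip. That analysis is exactly what \cite{graphs-himw}*{Lemma~6.3} packages, which is why we invoke it directly here rather than reproduce it.
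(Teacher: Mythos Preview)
Your proposal is correct and matches the paper's own treatment: the paper does not prove Lemma~\ref{slope-bound-lemma} independently but simply introduces it as ``the following special case of Lemma~6.3 in~\cite{graphs-himw}'', which is exactly the reduction you carry out. Your additional heuristic sketch of the mechanism is extra commentary, but the substantive step is the same citation.
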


We have shown that, in our situation, the hypotheses of Lemma~\ref{slope-bound-lemma} hold with $A=\infty$ and $I=[\pi/4,c]$.
Hence,~\eqref{slope-bound} holds for every compact $K\subset (0,\infty)$.
It follows that the functions
\[
   (x,y)\in \RR\times (0,\infty) \mapsto u(x + s, y) - u(s,1)
\]
converge subsequentially as $s\to\infty$ to a function
\[
   w: \RR\times (0,\infty)\to \RR
\]
whose graph is a complete translator. 
But there is no such function by \cite{graphs-himw}*{Theorem 6.7}.
\end{proof}

The following proposition was not used
in this paper, but it justifies the assertions
in Remarks~\ref{pitchfork-remark}, 
 \ref{helicoid-remark}, and~\ref{yeti-remark}.

\begin{proposition}\label{regularity}
Suppose that
\[
  u:\{(x,y)\in B^2(0,R): y>0\}\to \RR
\]
is a smooth solution to the translator equation
such that 
\[
   u(x,0)
   =
   \begin{cases}
       -\infty &\text{if $x<0$}, \\
       \infty &\text{if $x>0$}.
   \end{cases}
\]
Then $\graph(u)\cup Z$
is a smooth-manifold-with-boundary.
\end{proposition}

\begin{proof}
Let $C(R)$ be the cylinder $B^2(0,R)\times\RR$.
Let $M$ be the graph of $u$.
Then $M$ minimizes the translator area in the following sense:
if $K$ is a compact portion of $M$
and if $K'$ is a compact surface in $C(R)\cap \{y>0\}$
with $\partial K'=\partial K$,
then
\[
  \area(K)\le \area(K').
\]
Here and in what follows, we use $\on{area}(\Sigma)$ to refer to the translator area of a surface $\Sigma$, computed with respect to Ilmanen's translator metric $g_{ij}(x,y,z)=e^{-z}\delta_{ij}$.

Actually, the assumption that $K'$ lies in $C(R) \cap \{ y > 0 \}$ is not needed.
For suppose that $K'$ is any compact surface with $\partial K'=\partial K$.
Let $Q$ be a compact convex subset of 
$B^2(0,R)\cap\{y>0\}$ such that $Q\times \RR$
contains $K$ and let $K''$ be the projection of
$K'$ to $Q\times \RR$.  Then
\[
   \area(K)\le \area(K'')\le \area(K').
\]

In particular, if $K$ is a convex, compact subset of 
$C(R)\cap\{y>0\}$, then $M\cap K$
and $V\cap \partial K$ have the same boundary,
where
\[
  V = \{(x,y,z)\in C(R): y>0,\, z<u(x,y)\}
\]
is the subgraph of $u$.
Thus
\begin{align*}
\area(M\cap K)
&\le
\area(V\cap \partial K) 
\\
&\le
\area(\partial K)
\end{align*}
Thus, if $K$ is any compact, convex subset of $C(R)$, then
\begin{align*}
  \area(M\cap K\cap\{y\ge \eps\})
  &\le
   \area(\partial K\cap\{y\ge \eps\}) \\
  &\le  \area(\partial K).
\end{align*}
Thus, letting $\eps\to 0$,
\[
\area(M\cap K)\le \area(\partial K).
\]

The fact that $M\cap K$ has finite area
together with the fact that $M\cap\{y=0\}$
has area $0$ implies that the surfaces
\begin{equation}\label{almost}
   M\cap K\cap \{y>\eps\} \tag{$\star$}
\end{equation}
converge (as currents, with the mass norm)
to $M\cap K$ as $\eps\to 0$.  Thus, $M\cap K$ is area-minimizing,
since it is the limit of the area-minimizing
surfaces~\eqref{almost}.

Let $S$ be the strip
\[
  S= \{(x,0,z): x\in (0,R), \, z\in \RR\}.
\]
Let $[\![M]\!]$, $[\![S]\!]$, and $[\![V]\!]$ be the locally integral currents
obtained by orienting $M$ by the upward unit normal, 
and $S$ by the unit normal $\ee_2$, 
and by giving $V$ the standard orientation.

Then
\[
  [\![\partial V ]\!] \mres C(R) = [\![M]\!] - [\![S]\!],
\]
so
\begin{align*}
[\![ \partial M ]\!] \mres C(R) 
&= [\![ \partial S ]\!] \mres C(R) \\
&= [\![Z ]\!].
\end{align*}
We have therefore proved that $M$ is translator-area minimizing and has boundary (in $C(R)$) the line $Z$
with multiplicity~$1$.  
Hence by~\cite{hardt-simon}, $M\cup Z$ is a smooth 
manifold-with-boundary.
\end{proof}

\begin{bibdiv}
\begin{biblist}

\bib{bamler-kleiner}{article}{
author = {Bamler, R.},
author = {Kleiner, B.},
title = {On the Multiplicity One Conjecture for Mean Curvature Flows of surfaces},
journal = {Preprint arXiv:2312.02106},
year = {2023},
}

\bib{chini}{article}{ 
author={Chini, F.},
doi = {10.1515/geofl-2020-0101},
url = {https://doi.org/10.1515/geofl-2020-0101},
title = {Simply connected translating solitons contained in slabs},
journal = {Geometric Flows},
number = {1},
volume = {5},
year = {2020},
pages = {102--120},}

\bib{clutterbuck}{article}{
   author={Clutterbuck, J.},
   author={Schn\"urer, O.},
   author={Schulze, F.},
   title={Stability of translating solutions to mean curvature flow},
   journal={Calc. Var. Partial Differential Equations},
   volume={29},
   date={2007},
   number={3},
   pages={281--293},
   issn={0944-2669},
   review={\MR{2321890}},
   doi={10.1007/s00526-006-0033-1},
}

\bib{colding-minicozzi}{book}{
author = {Colding, T.H.},
author = {Minicozzi II, W.P.},
label = {CM11},
title = {A course in minimal
surfaces}, 
series = {Graduate Studies in Mathematics}, 
volume = {121}, 
publisher = {American Mathematical Society, Providence, RI}, 
year = {2011},
}

\bib{davila}{article}{
   author={D\'avila, J.},
   author={del Pino, M.},
   author={Nguyen, X.},
   title={Finite topology self-translating surfaces for the mean curvature
   flow in $\mathbb{R}^3$},
   journal={Adv. Math.},
   volume={320},
   date={2017},
   pages={674--729},
   issn={0001-8708},
   review={\MR{3709119}},
   doi={10.1016/j.aim.2017.09.014},
}

\bib{gama}{article}{
author={Gama, E.S.},
author={Mart\'{\i}n, F.},
author={M\o ller, N.M.},
   title={Finite entropy translating solitons in slabs},
   journal={Amer. J. Math. (to appear)},
   date={2026},
}

\bib{gama-martin-moller-2025}{article}{
author={Gama, E.S.},
author={Mart\'{\i}n, F.},
author={M\o ller, N.M.},
   title={Uniqueness of tangent planes and (non-)removable singularities at infinity for collapsed translators},
   journal={Preprint arXiv:2509.11473},
   date={2025},
}

\bib{helicoidal}{article}{
author = {Halldorsson, H.P.},
year = {2013},
title = {Helicoidal surfaces rotating/translating under the mean curvature flow},
journal = {Geom. Dedicata},
pages = {45-65},
volume = {162},
number = {1},
URL = {https://doi.org/10.1007/s10711-012-9716-2},
doi = {10.1007/s10711-012-9716-2},
}

\bib{hardt-simon}{article}{
   author={Hardt, Robert},
   author={Simon, Leon},
   title={Boundary regularity and embedded solutions for the oriented
   Plateau problem},
   journal={Ann. of Math. (2)},
   volume={110},
   date={1979},
   number={3},
   pages={439--486},
   issn={0003-486X},
   review={\MR{0554379}},
   doi={10.2307/1971233},
}

\bib{graphs-himw}{article}{
   author={Hoffman, D.},
   author={Ilmanen, T.},
   author={Mart\'{\i}n, F.},
   author={White, B.},
   title={Graphical translators for mean curvature flow},
   journal={Calc. Var. Partial Differential Equations},
   volume={58},
   date={2019},
   number={4},
   pages={Paper No. 117, 29},
   issn={0944-2669},
   review={\MR{3962912}},
   doi={10.1007/s00526-019-1560-x},
   Label={HIMW19a},
}

\bib{scherk-like}{article}{
author = {Hoffman, D.},
author = {Mart\'in, M.},
author = {White, B.},
   title={Scherk-like translators for mean curvature flow},
   journal={J. Differential Geom.},
   volume={122},
   date={2022},
   number={3},
   pages={421--465},
   issn={0022-040X},
   review={\MR{4544559}},
   doi={10.4310/jdg/1675712995},
   Label={HMW22a},
}

\bib{tridents}{article}{
author = {Hoffman, D.},
author = {Mart\'in, M.},
author = {White, B.},
   title={Nguyen's tridents and the classification of semigraphical
   translators for mean curvature flow},
   journal={J. Reine Angew. Math.},
   volume={786},
   date={2022},
   pages={79--105},
   issn={0075-4102},
   review={\MR{4434751}},
   doi={10.1515/crelle-2022-0005},
   Label={HMW22b},
}

\bib{morse-rado}{article}{
author = {Hoffman, D.},
author = {Mart\'in, M.},
author = {White, B.},
title={Morse-Rad\'{o} theory for minimal surfaces},
volume={108},
number={4},
pages={1669-1700},
date={2023},
journal={J. Lond. Math. Soc.},
doi={10.1112/jlms.12791},
Label={HMW23a},
}

\bib{annuli}{article}{
author = {Hoffman, D.},
author = {Mart\'in, M.},
author = {White, B.},
journal = {Advances in Mathematics},
volume = {455},
year = {2024},
title = {Translating annuli for mean curvature flow},
}
 \bib{ilmanen-1994}{article}{
    author={Ilmanen, T.},
    title={Elliptic regularization and partial regularity for motion by mean curvature},
    journal={Mem. Amer. Math. Soc.},
    volume={108},
    date={1994},
    number={520},
    pages={x+90},
    review={\MR{1196160 (95d:49060)}},
    review={Zbl 0798.35066},
    doi={10.1090/memo/0520},
    }

\bib{nguyen}{article}{
   author={Nguyen, X.},
   title={Translating tridents},
   journal={Comm. Partial Differential Equations},
   volume={34},
   date={2009},
   number={1-3},
   pages={257--280},
   issn={0944-2669},
   review={\MR{2512861}},
   doi={10.1080/03605300902768685},
}

\bib{nguyen-reapers}{article}{
   author={Nguyen, X.},
   title={Complete embedded self-translating surfaces under mean curvature
   flow},
   journal={J. Geom. Anal.},
   volume={23},
   date={2013},
   number={3},
   pages={1379--1426},
   issn={1050-6926},
   review={\MR{3078359}},
   doi={10.1007/s12220-011-9292-y},
}

\bib{nguyen-doubly}{article}{
   author={Nguyen, X.},
   title={Doubly periodic self-translating surfaces for the mean curvature
   flow},
   journal={Geom. Dedicata},
   volume={174},
   date={2015},
   pages={177--185},
   issn={0046-5755},
   review={\MR{3303047}},
   doi={10.1007/s10711-014-0011-2},
}

\bib{perez}{article}{
author = {P\'erez, J.},
journal = {Calc. Var. PDE},
title = {Stable embedded minimal surfaces bounded by a straight line},
year = {2007},
volume = {29},
pages = {267-279}, 
}

\bib{smith}{article}{
   author={Smith, Graham},
   title={On complete embedded translating solitons of the mean curvature flow that are of finite genus},
   journal={Anal. PDE},
   volume={17},
   date={2024},
   number={4},
   pages={1175--1236},
   issn={2157-5045},
   review={\MR{4746869}},
   doi={10.2140/apde.2024.17.1175},
}

\bib{spruck-xiao}{article}{
   author={Spruck, J.},
   author={Xiao, L.},
   title={Complete translating solitons to the mean curvature flow in $\mathbb{R}^3$ with nonnegative mean curvature},
   journal={Amer. J. Math.},
   volume={142},
   date={2020},
   number={3},
   pages={993--1015},
   doi={10.1353/ajm.2020.0023},
}

\bib{ancient-wang}{article}{
author = {Wang, X.J.},
title = {Convex solutions to the mean curvature flow},
journal = {Ann. of Math.},
volume = {173},
number = {2},
year = {2011}, 
pages = {1185-1239},
}

\end{biblist}

\end{bibdiv}

\end{document}